\DeclareOldFontCommand{\rm}{\normalfont\rmfamily}{\mathrm}
\def\F{\Bbb F}
\def\Z{\Bbb Z}
\def\ad{\operatorname{ad}}
\def\Der{\operatorname{Der}}
\def\dim{\operatorname{dim}}
\def\End{\operatorname{End}}
\def\Hom{\operatorname{Hom}}
\def\g{\frak g}
\def\gl{\frak{gl}}
\def\h{\frak h}
\def\a{\frak{a}}
\theoremstyle{plain}\swapnumbers
\newtheorem{Theorem}{Theorem}[section]
\newtheorem{lema}[Theorem]{Lemma}
\newtheorem{proposicion}[Theorem]{Proposition}
\newtheorem{cor}[Theorem]{Corollary}
\newtheorem{rmk}[Theorem]{Remark}
\newtheorem{claim}[Theorem]{Claim}
\begin{document}

\title[Homogeneous quadratic Lie super algebras]{Homogeneous quadratic Lie super algebras}

\author{R. Garc\'{\i}a-Delgado}

\address{Facultad de Ciencias, UASLP, Av. Salvador Nava s/n, Zona Universitaria, CP 78290; San Luis Potos\'{i}, M\'exico}
\email{rosendo.garciadelgado@alumnos.uaslp.edu.mx}

\keywords {Quadratic Lie super algebra; Double extension; Invariant metric.}
\subjclass{
Primary:
17Bxx, 17B05, 17B60,
Secondary:
15A63, 17B30, 
}

\maketitle


\begin{abstract}
In this work we state a version of the double extension for homogeneous quadratic Lie super algebras that includes even and odd cases. We prove that any indecomposable, non-simple and homogeneous quadratic Lie super algebra is obtained by means of this type of double extension. We also show that with this construction we can recover previously studied cases as well as some other which can not be recover with former versions of double extensions. 
\end{abstract}


\section*{Introduction}
A {\it homogeneous quadratic Lie super algebra} is a triple $(\g,[\,\cdot\,,\,\cdot\,],B)$, where $(\g,[\,\cdot\,,\,\cdot\,])$ is a 
Lie super algebra and $B:\g \times \g \to \F$ is an invariant, non-degenerate, super-symmetric, homogeneous 
and bilinear form, called it {\it invariant metric}. When the context is clear, we refer to a quadratic Lie super algebra only by $\g$.
\smallskip

For classical quadratic Lie algebras there are well-known procedures to construct such structures, One of these is the double extension (see \cite{Med}). In addition, Medina-Revoy theorem proves that any indecomposable non-simple quadratic Lie algebra, is a double extension of another quadratic Lie algebra of smaller dimension. Another approach can be found in \cite{Kath}, where the authors define two canonical ideals for a quadratic Lie algebra without simple ideals. Then the authors construct certain cohomology set that allows them to get classification results. 
\smallskip

Concerning to Lie super algebras, in \cite{Ben} the authors state a notion of double extension for even quadratic Lie super algebras, and prove that any indecomposable, non-simple and even quadratic Lie super algebra can be constructed using this procedure, which they called it \emph{generalized double extension}. Just as in the classical case, the generalized double extension gives a way to construct an even quadratic Lie super algebra on a super-space of the form $\g=\a \oplus \h \oplus \a^{*}$, where $\h$ is an even 
quadratic Lie super algebra and $\a$ is a Lie super algebra (see \cite{Ben}, {\bf Thm. 2.3}).
\smallskip

With regard to odd quadratic Lie super algebras, in \cite{Alb} is carried out similar ideas as in the even case. Let $\mathfrak{h}$ be an odd quadratic Lie super algebra and $\a$ be a one-dimensional super-space with odd-generator. In \cite{Alb} appears the conditions to set an odd quadratic Lie 
super algebra structure on the super-space $\mathfrak{a} \oplus \mathfrak{h} \oplus \mathbf{P}(\mathfrak{a}^{*})$, 
where $\mathbf{P}$ is the change of parity operator (see {\bf Thm 4.2} in \cite{Alb} and \textbf{Prop. \ref{propiedades de P}} below).
\smallskip

There are quadratic Lie super algebras that can not be obtained 
from the results of \cite{Alb} and \cite{Ben}. For instance, the Heisenberg Lie super algebra 
with odd bilinear form extended by an even derivation (see \cite{Mac}). The underlying
super-space of this Lie super algebra is of the form: $\F D \oplus \mathfrak{h} \oplus \F \hslash$, 
where $|D|=0$, $|\hslash|=1$, $\mathfrak{h}$ is a non-degenerate sub-super-space and the structure 
on $\mathfrak{h} \oplus \F \hslash$ is of a central extension of $\mathfrak{h}$ by $\F \hslash$. 
\smallskip

The aim of this work is to provide a version of the generalized double extension that includes the odd case. We proceed as in \cite{Ben}, with the difference that we state definitions and results that cover the odd case. We state the notion of \emph{$\delta$-coadjoint representation} of a Lie super algebra (see \textbf{Prop. \ref{prop delta coadjunta}}), which is a version of the coadjoint representation that deals with even and odd cases. In \textbf{Lemma \ref{lema contexto}} we give the notion of \emph{$\delta$-context of generalized double extension} -- similar to the one given in \cite{Ben}--, and in \textbf{Thm. \ref{teorema doble extension super}}, we state a version of the generalized double extension that covers the odd case.
\smallskip

We prove that any non-simple, indecomposable and homogeneous quadratic Lie super algebra can be identified with this version of double extension (see \textbf{Cor. \ref{corolario 1}}). Finally, we prove that with the results obtained here, we can recover the odd case studied in \cite{Alb}, as well as the Heisenberg Lie super algebra with odd bilinear form extended by an even derivation (see \cite{Mac}).

\section{Basic definitions}

A vector space $V$ is \textbf{$\mathbb{Z}_2$-graded}, if for each $\delta$ in $\Z_2=\{0,1\}$, there is a subspace $V_{\delta}$ such that $V=V_0 \oplus V_1$. A non-zero element $u$ in $V$ is 
\textbf{homogeneous of degree $|u|$}, if $u$ belongs to $V_{|u|}$, where $|u|$ is in $\Z_2$. A $\mathbb{Z}_2$-graded space is called \textbf{super-space}. We assume the trivial graduation on the ground field $\mathbb{F}$, that is $\mathbb{F}_0=\mathbb{F}$ and $\mathbb{F}_1=\{0\}$.
\smallskip

The \textbf{dimension} of a super-space $V=V_0 \oplus V_1$ is its dimension as a vector space, that is $\dim(V)=\dim(V_0)+\dim(V_1)$.
\smallskip

From now on we only consider homogeneous elements in definitions and statements.
\smallskip

A \textbf{Lie super algebra} is a super-space $\mathfrak{g}=\g_0 \oplus \g_1$, with a bilinear map $[\,\cdot\,,\,\cdot\,]:\mathfrak{g} \times \mathfrak{g} \rightarrow \mathfrak{g}$, satisfying:
\smallskip

\textbf{(i)} $[\mathfrak{g}_{\eta},\mathfrak{g}_{\zeta}] \subseteq \mathfrak{g}_{\eta+\zeta}$, for all $\eta,\zeta$ in $\mathbb{Z}_2$.
\smallskip

\textbf{(ii)} The bilinear map $[\,\cdot\,,\,\cdot\,]$ is \textbf{super skew-symmetric}: 
$$
[x,y]=-(-1)^{|x||y|}[y,x], \quad \text{ for all }x,y \text{ in }\mathfrak{g}.
$$

\textbf{(iii)} The bilinear map $[\,\cdot\,,\,\cdot\,]$ satisfies the \textbf{Jacobi super identity}:
\begin{equation}\label{Jacobi super identity}
\begin{split}
& \sum_{\text{cyclic}}(-1)^{|x||z|}[x,[y,z]]\\
&=(-1)^{|x||z|}[x,[y,z]]+(-1)^{|y||x|}[y,[z,x]]+(-1)^{|z||y|}[z,[x,y]]=0,
\end{split}
\end{equation}
A blinear map $[\,\cdot\,,\,\cdot\,]$ satisfying \textbf{(i),(ii),(iii)} is called \textbf{bracket}. 
\smallskip

An \textbf{ideal} $I$ of a Lie super algebra $\g$ is a graded subspace $I=I_0 \oplus I_1$, of $\g$ such that $[\g,I] \subset I$.
\smallskip

Let $\mathfrak{g}$ be a Lie super algebra and let $B:\mathfrak{g} \times \mathfrak{g} \rightarrow \mathbb{F}$ be a bilinear form.
\smallskip

\textbf{(i)} $B$ is \textbf{super-symmetric} if $B(x,y)\!\!=\!\!(-1)^{|x||y|}B(y,x)$ for all $x,\!y$ in $\mathfrak{g}$.
\smallskip

\textbf{(ii)} $B$ is \textbf{invariant} if $B([x,y],z)=B(x,[y,z])$, for all $x,y,z$ in $\mathfrak{g}$.
\smallskip

\textbf{(iii)} $B$ is \textbf{even} if $B(\mathfrak{g}_0,\mathfrak{g}_1)=B(\mathfrak{g}_1,\mathfrak{g}_0)=\{0\}$. In this case we write $|B|=0$.
\smallskip

\textbf{(iv)} $B$ is \textbf{odd} if $B(\mathfrak{g}_{\eta},\mathfrak{g}_{\eta})=\{0\}$, for all $\eta$ in $\mathbb{Z}_2$. In this case we write $|B|=1$.
\smallskip

\textbf{(v)} The bilinear form $B$ is \textbf{homogeneous} if either it is even or odd.
\smallskip

If $B$ is non-degenerate, invariant, super-symmetric and homogeneous then $B$ is said to be an \textbf{invariant metric} on $\mathfrak{g}$.
\smallskip

A {\bf quadratic Lie super algebra of $\delta$-degree}, is a triple $(\g,[\,\cdot\,,\,\cdot\,],B)$, where $(\g,[\,\cdot\,,\,\cdot\,])$ is a Lie super algebra and $B$ is an invariant metric of $\delta$-degree, that is $|B|=\delta$. 
\smallskip

Let $B$ be a non-degenerate, super-symmetric and homogeneous bilinear form on a super-space $\g$.
Let $B^{\flat}:\g\to\g^\ast$ be the map defined by
$B^{\flat}(x)\,(y)=B(x,y)$, for all $x,y$ in $\g$, then 
$|B^{\flat}(x)|=|B|+|x|$. A homogeneous linear map $D:\g \to \g$, is \textbf{skew-symmetric with respect to $B$} if: $B(D(x),y)=-(-1)^{|x||D|}B(x,D(y))$. The super-space generated by skew-symmetric maps with respect to $B$, is denoted by $\mathfrak{o}(B)$.
\smallskip

If a Lie super algebra $\g$ admits an invariant metric $B$, then
$B^{\flat}$ is an isomorphism of \emph{$\g$-modules}, that is: $B^{\flat} \circ \ad(x)= (-1)^{|x||B|}\ad^\ast\!(x) \circ B^{\flat}$, where $\ad^{\ast}:\g \to \gl(\g^{\ast})$ is the \emph{coadjoint representation} of $\g$, defined by: 
\begin{equation}\label{representacion coadjunta}
\ad^\ast(x)(\theta)=-(-1)^{|x||\theta|}\,\theta\circ\ad(x),\,\text{ for all }x \in \g,\text{ and }\theta \in \g^{\ast}.
\end{equation}

A \textbf{derivation} of a Lie super algebra $\mathfrak{g}$, is a homogeneous linear map $D:\g \to \g$, such that: $D([x,y])=[D(x),y]+(-1)^{|D||x|}[x,D(y)]$, for all $x,y$ in $\g$. The vector super-space generated by all derivations of $\g$, is denoted by $\Der(\mathfrak{g})$.

\subsection{$\delta$-coadjoint representation}

Let $V=V_0 \oplus V_1$ be a super-space. Let $\mathbf{P}(V)$ be the super-space $\mathbf{P}(V)=\mathbf{P}(V)_0 \oplus \mathbf{P}(V)_1$, where $\mathbf{P}(V)_0=V_1$, and $\mathbf{P}(V)_1=V_0$.
\smallskip

The map $V \mapsto \mathbf{P}(V)$ is called \textbf{change of parity operator.} As vector spaces, $V$ and $\mathbf{P}(V)$ are the same but as super-spaces they are different. In order to distinguish the elements of $V$ from that of $\mathbf{P}(V)$, we write $P(v)$ to denote an element of $\mathbf{P}(V)$, for a given $v$ in $V$. If $v$ is a homogeneous element of $V$, the graded of $P(v)$ is $|P(v)|=|v|+1$.
\smallskip

Let $\delta$ be in $\Z_2$. We define the super-space $\mathbf{P}_{\delta}(V)$, by:
\begin{equation}\label{48}
\mathbf{P}_{\delta}(V)=\left\{
\begin{array}{cl}
V&\mbox{ if }\delta=0\\
\mathbf{P}(V)&\mbox{ if }\delta=1.
\end{array}\right.
\end{equation}

We have the following properties of $\mathbf{P}_{\delta}$.

\begin{proposicion}\label{propiedades de P}{\sl
Let $U,V$ and $W$ be super-spaces.
\smallskip

\textbf{(i)} $\mathbf{P}_{\delta}\left(\mathbf{P}_{\delta}(V) \right)=V$.
\smallskip

\textbf{(ii)} $\mathbf{P}_{\delta}(\Hom\left(V,W)\right)=\Hom\left(\mathbf{P}_{\delta}(V),W\right)=\Hom(V,\mathbf{P}_{\delta}(W))$.
\smallskip

\textbf{(iii)} Let $\mathcal{L}^2(U;V;W)$ be the super-space generated by homogeneous bilinear maps $f\!:\!U \!\times \!V \!\to \!W$. Then $\mathbf{P}_{\delta}(\mathcal{L}^2(U;V;W))\!=\!\mathcal{L}^2(U;V;\mathbf{P}_{\delta}(W))$.
\smallskip

\textbf{(iv)} $\mathbf{P}_{\delta}(V^{*})=\left(\mathbf{P}_{\delta}(V)\right)^{*}$.}

\end{proposicion}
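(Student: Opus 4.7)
The plan is to reduce everything to the case $\delta=1$, since for $\delta=0$ the operator $\mathbf{P}_0$ is the identity on super-spaces and all four statements are tautological. So I assume $\delta=1$ throughout and prove each item by tracking the degrees of homogeneous elements explicitly using the rule $|P(v)|=|v|+1$.

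For part \textbf{(i)} I would compute the graded components directly: by definition $\mathbf{P}(V)_0=V_1$ and $\mathbf{P}(V)_1=V_0$, so applying $\mathbf{P}$ again gives $\mathbf{P}(\mathbf{P}(V))_0=\mathbf{P}(V)_1=V_0$ and $\mathbf{P}(\mathbf{P}(V))_1=\mathbf{P}(V)_0=V_1$, which is exactly $V$ as a super-space.

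For parts \textbf{(ii)} and \textbf{(iii)} the underlying sets of the three $\Hom$ (respectively $\mathcal{L}^2$) spaces coincide, so what must be checked is that the $\Z_2$-grading matches. A homogeneous $f\in\Hom(V,W)$ of degree $|f|$ satisfies $f(V_\eta)\subseteq W_{\eta+|f|}$. Viewed as a map $\mathbf{P}(V)\to W$ it sends $\mathbf{P}(V)_\eta=V_{\eta+1}$ into $W_{\eta+1+|f|}=W_{\eta+(|f|+1)}$, hence its degree as an element of $\Hom(\mathbf{P}(V),W)$ is $|f|+1$; this is precisely the degree assigned to $P(f)\in\mathbf{P}(\Hom(V,W))$. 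The same argument works for $\Hom(V,\mathbf{P}(W))$: now $f(V_\eta)\subseteq\mathbf{P}(W)_{\eta+(|f|+1)}=W_{\eta+|f|}$, again giving degree $|f|+1$. For \textbf{(iii)} I would repeat this computation with a bilinear map $f:U\times V\to W$ of degree $|f|$, noting that $f(U_\alpha,V_\beta)\subseteq W_{\alpha+\beta+|f|}$; composing the target with $\mathbf{P}$ shifts the output degree by $1$ and therefore shifts $|f|$ by $1$ in the bilinear grading on $\mathcal{L}^2(U;V;\mathbf{P}(W))$.

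Part \textbf{(iv)} follows immediately from \textbf{(ii)} by specializing $W=\F$ and recalling the convention $\F_0=\F$, $\F_1=\{0\}$, so that $V^{\ast}=\Hom(V,\F)$ and $\mathbf{P}_{\delta}(V^{\ast})=\mathbf{P}_{\delta}(\Hom(V,\F))=\Hom(\mathbf{P}_{\delta}(V),\F)=(\mathbf{P}_{\delta}(V))^{\ast}$.

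I do not expect a serious obstacle here: the proposition is really a bookkeeping statement about how the parity shift interacts with standard functorial constructions. The only mild subtlety is being consistent about which grading a map carries when its source or target is reinterpreted via $\mathbf{P}$; once one adopts the convention that $f$ and $P(f)$ are the same set-theoretic map but differ in degree by $1$, each identification in (ii)--(iv) amounts to a single application of $|P(v)|=|v|+1$.
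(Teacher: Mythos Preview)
Your proposal is correct and follows essentially the same strategy as the paper: reduce to $\delta=1$ and verify each identity by tracking how the parity shift moves the $\Z_2$-degree.

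Two minor differences are worth noting. For \textbf{(ii)} the paper argues by showing the inclusions $\mathbf{P}(\Hom(V,W))_\eta\subset\Hom(V,\mathbf{P}(W))_\eta$ and $\mathbf{P}(\Hom(V,W))_\eta\subset\Hom(\mathbf{P}(V),W)_\eta$ and then invokes finite-dimensionality to conclude equality; your argument, which observes that the underlying sets already coincide and simply checks that the gradings match, is cleaner and avoids that extra hypothesis. For \textbf{(iii)} the paper does not repeat the degree count for bilinear maps as you do, but instead uses the currying identification $\mathcal{L}^2(U;V;W)=\Hom(U,\Hom(V,W))$ and applies \textbf{(ii)} twice; this is a slightly slicker reduction you might prefer. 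Your treatment of \textbf{(i)} and \textbf{(iv)} matches the paper's exactly.
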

\begin{proof}
For $\delta\!=\!0$ the results are clear, we assume $\delta\!=\!1$. Let $\eta$ be in $\Z_2$.
\smallskip

\textbf{(i)} From $\mathbf{P}(\mathbf{P}(V)_{\eta})=\mathbf{P}(V_{\eta+1})=V_{\eta+2}=V_{\eta}$, it follows $\mathbf{P}(\mathbf{P}(V))=V$.
\smallskip

\textbf{(ii)} Let $T$ be in $\Hom(V,W)_0$, then $P(T)$ is in $\mathbf{P}(\Hom(V,W))_0=\Hom(V,W)_1$, that is $P(T)(V_{\eta})\subset W_{\eta+1}=\mathbf{P}(W_{\eta})$, hence $P(T)$ belongs to $\Hom(V,\mathbf{P}(W))_0$, which proves that $\mathbf{P}(\Hom(V,W))_0 \subset \Hom(V,\mathbf{P}(W))_0$. Similarly, from $P(T)(V_{\eta})=P(T)(\mathbf{P}(V)_{\eta+1})\subset W_{\eta+1}$, we get that $P(T)$ belongs to $\Hom(\mathbf{P}(V),W)_0$, then $\mathbf{P}(\Hom(V,W))_0 \subset \Hom(\mathbf{P}(V),W))_0$.
\smallskip

Similar argument proves that: $\mathbf{P}(\Hom(V,W))_1 \subset \Hom(V,\mathbf{P}(W))_1$, and $\mathbf{P}(\Hom(V,W))_1 \subset \Hom(\mathbf{P}(V),W))_1$; thus, 
$$
\mathbf{P}(\Hom(V,W))\! \subset \!\Hom(V,\mathbf{P}(W)),\,\text{ and }\,\mathbf{P}(\Hom(V,W)) \!\subset \!\Hom(\mathbf{P}(V),W).
$$
Since $V$ and $W$ are finite dimensional, we deduce that $\mathbf{P}(\Hom(V,W))=\Hom(V,\mathbf{P}(W))=\Hom(\mathbf{P}(V),W)$.
\smallskip

\textbf{(iii)} Observe that $\mathcal{L}^2(U;V,W)=\Hom(U,\Hom(V,W))$. Applying \textbf{(ii)} we obtain:
$$
\aligned
\mathbf{P}(\mathcal{L}^2(U;V,W))&=\mathbf{P}(\Hom(U,\Hom(V,W)))=\Hom(U,\mathbf{P}(\Hom(V,W)))\\
\,&=\Hom(U,\Hom(V,\mathbf{P}(W)))=\mathcal{L}^2(U;V;\mathbf{P}(W)).
\endaligned
$$  

\textbf{(iv)} Due to $V^{\ast}=\Hom(V,\F)$, then by \textbf{(ii)} we get:
$$
\mathbf{P}(V^{\ast})=\mathbf{P}(\Hom(V,\F))=\Hom(\mathbf{P}(V),\F)=\mathbf{P}(V)^{\ast}.
$$
\end{proof}

\begin{rmk}\label{RMK}{\rm
If $T$ is an element of $\Hom(V,W)$, by \textbf{Prop. \ref{propiedades de P}.(ii)}, $P(T)$ is an element of $\Hom(\textbf{P}(V),W)$, defined by:
$$
P(T)(P(v))=T(v),\qquad \text{ for all }v \in V.
$$
If $T$ is homogeneous, then $P(T)$ and $T$ are different for they have different degrees.}
\end{rmk}

In the next result we introduce the coadjoint representation on $\mathbf{P}_{\delta}(\g)^{\ast}$.

\begin{proposicion}\label{prop delta coadjunta}{\sl
Let $(\g,[\,\cdot\,,\,\cdot\,])$ be a Lie super algebra over a field $\F$ and let $\ad^{\ast}:\g \to \gl(\g^{\ast})$ be the coadjoint representation of $\g$. For every $x$ in $\mathfrak{g}$ and $f$ in $\g^{*}$, let $\ad_{\boldsymbol{\delta}}^{\ast}(x)(P_{\delta}(f))$ be in $\mathbf{P}_{\delta}(\g)^{\ast}$ defined by:
\begin{equation}\label{coadjunta}
\ad_{\delta}^{\ast}(x)\left(P_{\delta}(f)\right)\left(P_{\delta}(y)\right)=-(-1)^{(|f|+\delta)|x|}f([x,y]), \quad \text{ for all }y \in \g,
\end{equation}
Then, \eqref{coadjunta} yields a representation $\ad_{\delta}^{\ast}:\g \to \gl\left(\mathbf{P}_{\delta}(\g)^{\ast}\right)$, of $\g$ on $\mathbf{P}_{\delta}(\g)^{\ast}$, such that:
\begin{equation}\label{relacion coadjuntas}
\ad_{\delta}^{\ast}(x)\circ P_{\delta}=(-1)^{\delta |x|}P_{\delta}\circ \ad^{\ast}(x),\quad \text{ for all }x \in \g.
\end{equation}
}
\end{proposicion}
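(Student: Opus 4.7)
The plan is to reduce everything to the ordinary coadjoint representation $\ad^{\ast}$ via the identification $\mathbf{P}_{\delta}(\g)^{\ast}=\mathbf{P}_{\delta}(\g^{\ast})$ given in \textbf{Prop. \ref{propiedades de P}.(iv)}, under which every element of $\mathbf{P}_{\delta}(\g)^{\ast}$ is uniquely of the form $P_{\delta}(f)$ for some $f\in\g^{\ast}$. The case $\delta=0$ is simply the known fact that $\ad^{\ast}$ is a representation, so the content is in the case $\delta=1$.

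First I would confirm that the formula \eqref{coadjunta} defines a well-defined homogeneous endomorphism $\ad_{\delta}^{\ast}(x)$ of $\mathbf{P}_{\delta}(\g)^{\ast}$ of degree $|x|$. Extending linearly in $P_{\delta}(y)$ produces a linear functional on $\mathbf{P}_{\delta}(\g)$. For the parity, observe that a nonzero value $f([x,y])$ forces $|f|=|x|+|y|$ in $\Z_{2}$, equivalently $|x|+|P_{\delta}(f)|+|P_{\delta}(y)|\equiv 0$, which is exactly the degree condition expected of the action of a degree-$|x|$ endomorphism on an element of degree $|P_{\delta}(f)|$.

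Next I would establish the intertwining identity \eqref{relacion coadjuntas} by direct evaluation on an arbitrary homogeneous $f\in\g^{\ast}$ and pairing with $P_{\delta}(y)$. The left-hand side is just the defining formula \eqref{coadjunta}. For the right-hand side, \textbf{Remark \ref{RMK}} gives $P_{\delta}(\ad^{\ast}(x)(f))(P_{\delta}(y))=\ad^{\ast}(x)(f)(y)=-(-1)^{|x||f|}f([x,y])$, and multiplying by $(-1)^{\delta|x|}$ produces $-(-1)^{(\delta+|f|)|x|}f([x,y])$, matching the left-hand side.

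Finally, the homomorphism property $\ad_{\delta}^{\ast}([x,y])=[\ad_{\delta}^{\ast}(x),\ad_{\delta}^{\ast}(y)]$ follows from \eqref{relacion coadjuntas}. Since $P_{\delta}\circ P_{\delta}=\Id$ by \textbf{Prop. \ref{propiedades de P}.(i)}, the relation can be rewritten as $\ad_{\delta}^{\ast}(x)=(-1)^{\delta|x|}\,P_{\delta}\circ\ad^{\ast}(x)\circ P_{\delta}$. Composing two such expressions, the middle $P_{\delta}$'s cancel, the sign $(-1)^{\delta(|x|+|y|)}$ factors out of both terms of the super commutator, and the known identity $[\ad^{\ast}(x),\ad^{\ast}(y)]=\ad^{\ast}([x,y])$ delivers $\ad_{\delta}^{\ast}([x,y])$. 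The main obstacle throughout is the parity bookkeeping — tracking the signs $(-1)^{|x||f|}$ and $(-1)^{\delta|x|}$ consistently — but once one systematically uses the identification $\mathbf{P}_{\delta}(\g^{\ast})=\mathbf{P}_{\delta}(\g)^{\ast}$, the verifications are routine.
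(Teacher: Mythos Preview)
Your proposal is correct and follows essentially the same route as the paper: reduce to $\delta=1$, verify the intertwining relation \eqref{relacion coadjuntas} by pairing with $P_{\delta}(y)$ and invoking \textbf{Remark \ref{RMK}}, and then deduce the representation property from that of $\ad^{\ast}$. Your explicit parity check and your rewriting $\ad_{\delta}^{\ast}(x)=(-1)^{\delta|x|}P_{\delta}\circ\ad^{\ast}(x)\circ P_{\delta}$ streamline the final step slightly compared to the paper's term-by-term computation, but the argument is the same in substance.
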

\begin{proof}
First we will prove \eqref{relacion coadjuntas}. For $\delta=0$, $\ad^{\ast}$ and $\ad^{\ast}_{\boldsymbol{\delta}}$ are the same (see \eqref{representacion coadjunta}), so we assume $\delta=1$. Let $x,y$ be in $\g$ and $f$ be in $\g^{\ast}$. Using \eqref{representacion coadjunta} and \eqref{coadjunta} we get:
\begin{equation}\label{super1}
\ad_{\mathbf{1}}^{\ast}(x)(P(f))(P(y))\!=\!-(\!-1)^{(|f|+1)|x|}\!f([x,y])\!=\!(\!-1)^{|x|}\!\ad^{\ast}\!(x)(f)(y).
\end{equation}
We know that $\ad^{\ast}(x)(f)$ belongs to $\Hom(\g;\F)$. Then by \textbf{Prop. \ref{propiedades de P}.(ii)}, $P(\ad^{\ast}(x)(f))$ belongs to $\mathbf{P}(\Hom(\g;\F))=\Hom(\mathbf{P}(\g);\F)$. Hence, by \textbf{Remark \ref{RMK}} we get: $P(\ad^{\ast}(x)(f))(P(y))=\ad^{\ast}(x)(f)(y)$. We substitute this in \eqref{super1}, to obtain:
$$
\ad_{\mathbf{1}}^{\ast}(x)(P(f))(P(y))=(-1)^{|x|}P(\ad^{\ast}(x)(f))(P(y)),
$$
from where we deduce \eqref{relacion coadjuntas} for $\delta=1$. Now we shall prove that $\ad_{\mathbf{1}}^{\ast}$ is a representation of $\g$ on $\mathbf{P}(\g)^{\ast}$. From \eqref{relacion coadjuntas} and \eqref{super1} we get:
$$
\aligned
\ad_{\mathbf{1}}^{\ast}(x)\circ \ad_{\mathbf{1}}^{\ast}(y)(P(f))&=(-1)^{|y|}\ad_{\mathbf{1}}^{\ast}(x)(P(\ad^{\ast}(y)(f)))\\
\,&=(-1)^{|x|+|y|}P\left(\ad^{\ast}(x)\circ \ad^{\ast}(y)(f)\right).\\
\endaligned
$$
Similarly we have: 
$$
\ad_{\mathbf{1}}^{\ast}(y)\circ \ad_{\mathbf{1}}^{\ast}(x)(P(f))=(-1)^{|x|+|y|}P\left(\ad^{\ast}(y)\circ \ad^{\ast}(x)(f)\right)
$$
Then, by \eqref{relacion coadjuntas} it follows
\begin{equation}\label{super3}
\begin{split}
[\ad_{\mathbf{1}}^{\ast}(x),\ad_{\mathbf{1}}^{\ast}(y)]_{\gl(\mathbf{P}(\g^{\ast}))}(P(f))&=(-1)^{|x|+|y|}P\left(\ad^{\ast}([x,y])(f)\right)\\
\,&=\ad_{\mathbf{1}}^{\ast}([x,y])(P(f)).
\end{split}
\end{equation}
Which proves that $\ad^{\ast}_{\mathbf{1}}$ is a representation of $\g$ on $\mathbf{P}(\g)^{\ast}$.
\end{proof}

We call $\ad^{\ast}_{\boldsymbol{\delta}}$, the \textbf{$\delta$-coadjoint representation} of $(\g,[\,\cdot\,,\,\cdot\,])$.

\subsection{Generalized semi-direct product}

Let $(\a,[\,\cdot\,,\,\cdot\,]_{\a})$ and $(\h,[\,\cdot\,,\,\cdot\,]_{\h})$ be Lie super algebras. Let $\Theta:\a \to \Der(\h)$ be an even linear map and $\Lambda:\a \times \a \to \h$ be an even super antisymmetric bilinear map, such that:
\begin{align}
\label{semidirect 1} & [\Theta(x),\Theta(y)]_{\gl(\h)}-\Theta([x,y]_{\a})=\ad_{\h}(\Lambda(x,y)),\quad \text{ and }\\
\label{semidirect 2} & \sum_{\text{cyclic}}(-1)^{|x||z|}\left(\Theta(x)(\Lambda(y,z))+\Lambda(x,[y,z]_{\a})\right)=0,
\end{align}
for all $x,y,z$ in $\a$. We define the bracket $[\,\cdot\,,\,\cdot\,]$ on the super-space $\g=\a \oplus \h$, by:
\begin{equation}\label{semidirect 3}
[x+u,y+v]\!=\![x,y]_{\a}\!+\!\Theta(x)(v)\!-\!(-1)^{|x||y|}\Theta(y)(u)\!+\!\Lambda(x,y)\!+\![u,v]_{\h},
\end{equation}
for all homogeneous elements $x+u,y+v$ of $\g$, that is $|x|=|u|$ and $|y|=|v|$. The bracket $[\,\cdot\,,\,\cdot\,]$ induces a Lie super algebra structure on $\g$, called it the \emph{generalized semi-direct product of $(\h,[\,\cdot\,,\,\cdot\,]_{\h})$ by $(\a,[\,\cdot\,,\,\cdot\,]_{\a})$, by means of $(\Theta,\Lambda)$ (see \cite{Dimitri} and \cite{Ben}).}

\section{Generalized double extension of homogeneous quadratic Lie super algebras}

In this section we shall construct a homogeneous quadratic Lie super algebra on a super-space of the form $\a \oplus \h \oplus \textbf{P}_{\delta}(\a)^{\ast}$, where $(\a,[\,\cdot\,,\,\cdot\,]_{\h})$ is a Lie super algebra and $(\h,[\,\cdot\,,\,\cdot\,]_{\h},B_{\h})$ is a quadratic Lie super algebra of degree $\delta$, that is $|B_{\h}|=\delta$. The invariant metric on $\a \oplus \h \oplus \textbf{P}_{\delta}(\a)^{\ast}$, also will have degree $\delta$. 
\smallskip

First we need an even linear map $\rho:\a \to \Der(\h)\cap \mathfrak{o}(B_{\h})$ and an even super skew-symmetric bilinear map $\lambda:\a \times \a \to \h$, satisfying:
\begin{align}
\label{deh1}& [\rho(x),\rho(y)]_{\gl(\h)}-\rho([x,y]_{\a})=\ad_{\h}(\lambda(x,y)),\,\,\,\text{ and }\\
\label{deh2}& \sum_{\text{cyclic}}(-1)^{|z||x|}\left(\rho(x)(\lambda(y,z))\!+\!\lambda(x,[y,z]_{\a})\right)\!=\!0,\,\text{ for all }x,y,z \in \a.
\end{align}
Let $\chi_{\delta}:\a \times \h \to \mathbf{P}_{\delta}(\h)^{\ast}$ be the even bilinear map defined by:
\begin{equation}\label{chi delta}
\chi_{\delta}(x,u)(P_{\delta}(y))=-(-1)^{|u||y|}B_{\h}(\lambda(x,y),u),
\end{equation}
for all $x,y$ in $\a$ and $u$ in $\h$. If $\chi:\a \times \h \to \a^{\ast}$ is the bilinear map: $\chi(x,u)(y)=-(-1)^{|u||y|}B_{\h}(\lambda(x,y),u)$, then $|\chi|=\delta$ and $\chi_{\delta}=P_{\delta}(\chi)$.
\smallskip

Let $\omega_{\delta}:\a \times \a \to \mathbf{P}_{\delta}(\a)^{\ast}$ be an even super skew-symmetric bilinear map, satisfying:
\begin{equation}\label{deh3}
\sum_{\text{cyclic}}(-1)^{|z||x|}\!\left(\ad_{\delta}^{\ast}(x)(\omega_{\delta}(y,z))\!+\!\omega_{\delta}(x,[y,z]_{\a})\!+\!\chi_{\delta}(x,\lambda(y,z))\right)\!=\!0,
\end{equation}
and the super \emph{cyclic condition}:
\begin{equation}\label{super ciclica}
\omega_{\delta}(x,y)(P_{\delta}(z))=(-1)^{(|y|+|z|)|x|}\omega_{\delta}(y,z)(P_{\delta}(x)),
\end{equation}
for all $x,y,z$ in $\a$. Finally, we need an even super skew-symmetric bilinear map $\Phi_{\delta}:\h \times \h \to \mathbf{P}_{\delta}(\a)^{\ast}$, defined by:
\begin{equation}\label{deh4}
\Phi_{\delta}(u,v)(P_{\delta}(x))=(-1)^{|x|(|u|+|v|)}B_{\h}(\rho(x)(u),v),
\end{equation}
for all $x$ in $\a$ and $u,v$ in $\h$. If $\Phi:\h \times \h \to \a^{\ast}$, is the map: $\Phi(u,v)(x)=(-1)^{|x|(|u|+|v|)}B_{\h}(\rho(x)(u),v)$, then $|\Phi|=\delta$ and $\Phi_{\delta}=P_{\delta}(\Phi)$.
\smallskip

The tuple $(\h,[\,\cdot\,,\,\cdot\,]_{\h},B_{\h},\rho,\lambda,\omega_{\delta})$ is called a \emph{$\delta$-context of generalized double extension of $(\h,[\,\cdot\,,\,\cdot\,]_{\h},B_{\h})$ by $(\a,[\,\cdot\,,\,\cdot\,]_{\a})$}.

\begin{lema}\label{lema contexto}{\sl
Let $(\h,[\,\cdot\,,\,\cdot\,]_{\h},B_{\h},\rho,\lambda,\omega_{\delta})$ be a $\delta$-context of generalized double extension of $(\h,[\,\cdot\,,\,\cdot\,]_{\h},B_{\h})$ by $(\a,[\,\cdot\,,\,\cdot\,]_{\a})$. Then the maps $\chi_{\delta}$ and $\Phi_{\delta}$, satisfy the following for all $x,y$ in $\a$ and $u,v$ in $\h$:
\begin{equation}\label{lema 1}
\begin{split}
& \Phi_{\delta}(\rho(x)(u),v)+(-1)^{|x||u|}\Phi_{\delta}(u,\rho(x)(v))\\
& -\ad^{\ast}_{\delta}(x)(\Phi_{\delta}(u,v))-\chi_{\delta}(x,[u,v]_{\h})=0,\,\,\,\text{ and }
\end{split}
\end{equation}
\begin{equation}\label{lema 2}
\begin{split}
& \chi_{\delta}([x,y]_{\a},u)-\chi_{\delta}(x,\rho(y)(u))+(-1)^{|x||y|}\chi_{\delta}(y,\rho(x)(u))\\
& -\ad_{\delta}^{\ast}(x)(\chi_{\delta}(y,u))+(-1)^{|x||y|}\ad_{\delta}^{\ast}(y)(\chi_{\delta}(x,u))\\
& +\Phi_{\delta}(\lambda(x,y),u)=0.
\end{split}
\end{equation}
In addition, $\Phi_{\delta}:\h \times \h \to \mathbf{P}_{\delta}(\a)^{\ast}$ satisfies:
\begin{equation}\label{Phi es cociclo}
\sum_{\text{cyclic}}(-1)^{|u||w|}\Phi_{\delta}(u,[v,w]_{\h})=0.
\end{equation}
}
\end{lema}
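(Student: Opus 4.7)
To prove the three identities, I would evaluate each of them on a generic test element of $\mathbf{P}_{\delta}(\a)$ (namely $P_{\delta}(z)$ for \eqref{lema 1} and \eqref{lema 2}, and $P_{\delta}(x)$ for \eqref{Phi es cociclo}) and unravel everything using the defining formulas \eqref{coadjunta}, \eqref{chi delta} and \eqref{deh4}. Every term then becomes a scalar of the form $B_{\h}(\,\cdot\,,\,\cdot\,)$ up to a sign, so the three identities reduce to scalar identities in $\F$ to be verified by a combination of sign bookkeeping and the available inputs: \eqref{deh1}, \eqref{deh2}, the invariance and super-symmetry of $B_{\h}$, and the fact that each $\rho(x)$ is simultaneously a derivation of $\h$ and skew-symmetric with respect to $B_{\h}$.

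For \eqref{lema 1}, after evaluating at $P_{\delta}(z)$ and factoring out the common prefactor $(-1)^{|z|(|u|+|v|)}$, the identity reduces to
\[
(-1)^{|z||x|}B_{\h}(\rho(z)\rho(x)u,v)+(-1)^{|x||u|+|z||x|}B_{\h}(\rho(z)u,\rho(x)v)+B_{\h}(\rho([x,z]_{\a})u,v)+B_{\h}(\lambda(x,z),[u,v]_{\h})=0.
\]
Applying the skew-symmetry of $\rho(x)$ to the second term turns it into $-B_{\h}(\rho(x)\rho(z)u,v)$, and the first three terms collapse to $-B_{\h}\bigl(([\rho(x),\rho(z)]_{\gl(\h)}-\rho([x,z]_{\a}))u,v\bigr)$. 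By \eqref{deh1} this equals $-B_{\h}([\lambda(x,z),u]_{\h},v)$, which cancels the fourth term by the invariance of $B_{\h}$.

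For \eqref{lema 2}, evaluation at $P_{\delta}(z)$ produces six scalar terms. In each of them I would first push $\rho(\cdot)$ across $B_{\h}$ by its skew-symmetry, and then rewrite the first argument of $\lambda$ or of $[\cdot,\cdot]_{\a}$ via the super skew-symmetry of $\lambda$ and of the bracket in $\a$. A careful comparison shows that, up to a common overall factor $(-1)^{|u||z|+|x||z|}$, the six resulting scalars match, one-to-one, the six summands in the cyclic expansion of \eqref{deh2} paired with $B_{\h}(\,\cdot\,,u)$; \eqref{deh2} then closes the argument. The only substantive work here is the sign check.

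For \eqref{Phi es cociclo}, evaluation at $P_{\delta}(x)$ and factoring out the cyclically invariant prefactor $(-1)^{|x|(|u|+|v|+|w|)}$ reduces the statement to
\[
\sum_{\text{cyclic}}(-1)^{|u||w|}B_{\h}(\rho(x)u,[v,w]_{\h})=0.
\]
Set $\psi_{x}(u,v):=B_{\h}(\rho(x)u,v)$. The super-symmetry of $B_{\h}$ combined with $\rho(x)\in\mathfrak{o}(B_{\h})$ shows that $\psi_{x}$ is super skew-symmetric, while the derivation property of $\rho(x)$ combined with the invariance and super-symmetry of $B_{\h}$ yields the identity
\[
\psi_{x}([u,v]_{\h},w)=\psi_{x}(u,[v,w]_{\h})+(-1)^{|u|(|v|+|w|)}\psi_{x}(v,[w,u]_{\h}).
\]
Using this rewrite on the first term of the cyclic sum produces a cancellation with the second term, reducing the sum to $(-1)^{|u||w|}\psi_{x}([u,v]_{\h},w)+(-1)^{|w||v|}\psi_{x}(w,[u,v]_{\h})$, and the super skew-symmetry of $\psi_{x}$ makes these two contributions opposite. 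The main obstacle throughout is sign tracking; no structural ingredient beyond those already encoded in the $\delta$-context is required, and the super-Jacobi identity in $\h$ is not needed here.
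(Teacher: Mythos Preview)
Your proposal is correct and follows essentially the same route as the paper: evaluate each identity on a test element $P_{\delta}(z)$ (resp.\ $P_{\delta}(x)$), expand via \eqref{coadjunta}, \eqref{chi delta}, \eqref{deh4}, and reduce to a scalar identity in $B_{\h}$ that is forced by \eqref{deh1}, \eqref{deh2}, and the derivation/skew-symmetry properties of $\rho$. The paper spells out only \eqref{lema 2} in detail and dismisses \eqref{lema 1} and \eqref{Phi es cociclo} with one line each (``equivalent to \eqref{deh1}'' and ``equivalent to $\rho(x)$ being a derivation''); your write-up of \eqref{lema 1} and your $\psi_{x}$ argument for \eqref{Phi es cociclo} simply make those two reductions explicit, so there is no substantive difference.
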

\begin{proof}
We shall only give the details of \eqref{lema 2}. The expression given in \eqref{lema 2} consists of six elements where each of them belongs to $\mathbf{P}_{\delta}(\a)^{\ast}$. We shall determine these six elements by evaluating them in an arbitrary element $P_{\delta}(z)$ of $P_{\delta}(\a)$, where $z$ belongs to $\a$.
\smallskip

The first three elements in \eqref{lema 2} are of the form $\chi_{\delta}$. We use the definition of $\chi_{\delta}$ in $\eqref{chi delta}$ to determine these first three elements in \eqref{lema 2}:
\begin{equation}\label{demostracion lema 1}
\begin{split}
& \chi_{\delta}([x,y]_\a,u)(P_{\delta}(z))\!=\!-(-1)^{|u||z|}B_{\h}(\lambda([x,y]_\a,z),u),\\
& -\chi_{\delta}(x,\rho(y)(u))(P_{\delta}(z))\!=\!(-1)^{|x|(|y|+|z|)+|u||z|}B_{\h}(\rho(y)(\lambda(z,x)),u),\\
& (-1)^{|x||y|}\chi_{\delta}(y,\rho(x)(u))(P_{\delta}(z))\!=\!(-1)^{|z||u|}B_{\h}(\rho(x)(\lambda(y,z)),u).
\end{split}
\end{equation}
The fourth and fifth elements in \eqref{lema 2}, involve the representation $\delta$-coadjoint $\ad_{\delta}^{\ast}:\a \to \gl(P_{\delta}(\a)^{\ast})$, and the bilinear map $\chi_{\delta}$. We use that $\chi_{\delta}=P_{\delta}(\chi)$, $|\chi|=\delta$, and the definition of $\ad_{\delta}^{\ast}$ (see \eqref{coadjunta}), to obtain:
\begin{equation}\label{demostracion lema 2}
\begin{split}
& -\!\ad_{\delta}^{\ast}(x)(\chi_{\delta}(y,u))(P_{\delta}(z))\!=\!(-1)^{|x|(|y|+|z|)+|u||z|}\!B_{\h}(\lambda(y,[z,x]_{\a}),u),\\
& (-1)^{|x||y|}\ad_{\delta}^{\ast}(y)(\chi_{\delta}(x,u))(P_{\delta}(z))\!=\!(-1)^{|u||z|}\!B_{\h}(\lambda(x,[y,z]_{\a}),u).
\end{split}
\end{equation}
In the sixth element we apply \eqref{Phi es cociclo} and we obtain:
\begin{equation}\label{demostracion lema 3}
\Phi_{\delta}(\lambda(x,y),\!u)(P_{\delta}(z))\!=\!(-1)^{|z|(|x|+|y|)+|u||z|}B_{\h}(\rho(z)(\lambda(x,y)),\!u).
\end{equation}
Observe that each of the expressions in \eqref{demostracion lema 1}, \eqref{demostracion lema 2} and \eqref{demostracion lema 3}, $(-1)^{|u||z|}$ is a common factor. We add each of the terms appearing at the left of $B_{\h}(\,\cdot\,,u)$, given in \eqref{demostracion lema 1}, \eqref{demostracion lema 2} and \eqref{demostracion lema 3}, without $(-1)^{|u||z|}$, and we get:
\begin{equation}\label{demostracion lema 4}
\begin{split}
& \!-\!\lambda([x,y]_{\a},z)\!+\!(-1)^{|x|(|y|+|z|)}\rho(y)(\lambda(z,x))\!+\!\!\rho(x)(\lambda(y,z))\\
& \!+\!(\!-1)^{|x|(|y|+|z|)}\lambda(y,[z,x]_{\a})\!+\!\lambda(x,[y,z]_{\a})\!+\!(\!-1)^{|z|(|x|+|y|)}\rho(z)(\lambda(x,y))
\end{split}
\end{equation}
Multiplying \eqref{demostracion lema 4} by $(-1)^{|x||z|}$, we obtain that \eqref{demostracion lema 4} is equal to:
\begin{equation}\label{demostracion lema 5}
\sum_{\text{cyclic}}(-1)^{|x||z|}\left(\rho(x)(\lambda(y,z))+\lambda(x,[y,z]_{\a})\right).
\end{equation}
which is zero, by \eqref{deh2}. Similar arguments prove that \eqref{lema 1} is equivalent to \eqref{deh1}. Finally, the expression \eqref{Phi es cociclo} is equivalent to the fact that $\rho(x)$ is an homogeneous derivation of $(\h,[\,\cdot\,,\,\cdot\,]_{\h})$, for all $x$ in $\a$.
\end{proof}

\begin{Theorem}\label{teorema doble extension super}{\sl
Let $(\h,[\,\cdot\,,\,\cdot\,]_{\h},B_{\h},\rho,\lambda,\omega_{\delta})$ be a $\delta$-context of generalized double extension of $(\h,[\,\cdot\,,\,\cdot\,]_{\h},B_{\h})$ by $(\a,[\,\cdot\,,\,\cdot\,]_{\a})$, where $|B_{\h}|=\delta$. Then, in the super-space $\g=\a \oplus \h \oplus \mathbf{P}_{\delta}(\a)^{\ast}$, there exists a quadratic Lie super algebra structure of degree $\delta$.}
\end{Theorem}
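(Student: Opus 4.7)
The plan is to define an explicit bracket on $\g = \a \oplus \h \oplus \mathbf{P}_{\delta}(\a)^{\ast}$ using all the data in the $\delta$-context, together with an invariant metric $B_{\g}$ of degree $\delta$, and then verify separately the $\Z_{2}$-grading, super skew-symmetry, and Jacobi super identity of the bracket, as well as non-degeneracy, super-symmetry, invariance, and degree of $B_{\g}$.

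Concretely, for homogeneous $x,y \in \a$, $u,v \in \h$, and $\alpha,\beta \in \mathbf{P}_{\delta}(\a)^{\ast}$, I would set
\begin{equation*}
[x,y]_{\g} = [x,y]_{\a} + \lambda(x,y) + \omega_{\delta}(x,y), \qquad [x,u]_{\g} = \rho(x)(u) + \chi_{\delta}(x,u),
\end{equation*}
\begin{equation*}
[x,\alpha]_{\g} = \ad_{\delta}^{\ast}(x)(\alpha), \qquad [u,v]_{\g} = [u,v]_{\h} + \Phi_{\delta}(u,v),
\end{equation*}
and $[u,\alpha]_{\g}=[\alpha,\beta]_{\g}=0$, extended by super skew-symmetry. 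For the metric, I would let $B_{\g}$ agree with $B_{\h}$ on $\h\times\h$, set $B_{\g}(x,\alpha)=\alpha(P_{\delta}(x))$ (extended super-symmetrically), and declare every other mixed pairing to vanish. Super skew-symmetry of $[\,\cdot\,,\,\cdot\,]_{\g}$, the $\Z_{2}$-grading, non-degeneracy of $B_{\g}$, and $|B_{\g}|=\delta$ are then immediate from the inputs: super skew-symmetry of $\lambda$, $\omega_{\delta}$, $\Phi_{\delta}$ and of the brackets on $\a,\h$; the bookkeeping $|P_{\delta}(x)|=|x|+\delta$ for the degree; and non-degeneracy of $B_{\h}$ together with the perfect pairing between $\a$ and $\mathbf{P}_{\delta}(\a)^{\ast}$ for non-degeneracy.

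The substantive verifications are invariance of $B_{\g}$ and the Jacobi super identity, both handled by exhaustion over the homogeneous summands. Invariance on a triple in $\h$ is invariance of $B_{\h}$; on a triple with one $\a$-factor and two $\h$-factors it follows from $\rho(\a)\subset \mathfrak{o}(B_{\h})$; on a triple with two $\a$-factors and one $\h$-factor it is precisely the definition \eqref{chi delta} of $\chi_{\delta}$ combined with super-symmetry of $B_{\h}$; and triples involving $\mathbf{P}_{\delta}(\a)^{\ast}$ reduce by \eqref{coadjunta} together with \eqref{deh4} and the definition of $\chi_{\delta}$ to identities already built into those definitions.

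For the Jacobi identity I would stratify by the number of factors in $\a$ occurring in the triple. Three factors in $\a$: the $\a$-component vanishes by Jacobi for $\a$, the $\h$-component by \eqref{deh2}, and the $\mathbf{P}_{\delta}(\a)^{\ast}$-component by \eqref{deh3} together with the super cyclic condition \eqref{super ciclica}. Two factors in $\a$ and one in $\h$: the $\h$-component is \eqref{deh1} applied to $u$, and the $\mathbf{P}_{\delta}(\a)^{\ast}$-component is exactly \eqref{lema 2} of Lemma \ref{lema contexto}. One factor in $\a$ and two in $\h$: the $\h$-component is the derivation property of $\rho(x)$, and the $\mathbf{P}_{\delta}(\a)^{\ast}$-component is \eqref{lema 1}. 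Three factors in $\h$: the $\h$-component is Jacobi for $\h$, and the $\mathbf{P}_{\delta}(\a)^{\ast}$-component is \eqref{Phi es cociclo}. Any triple containing a factor in $\mathbf{P}_{\delta}(\a)^{\ast}$ collapses, since $[\h,\mathbf{P}_{\delta}(\a)^{\ast}]=[\mathbf{P}_{\delta}(\a)^{\ast},\mathbf{P}_{\delta}(\a)^{\ast}]=0$ and $\ad_{\delta}^{\ast}$ is a representation by Proposition \ref{prop delta coadjunta}. The main hazard is the sign bookkeeping in the mixed $\a$/$\mathbf{P}_{\delta}(\a)^{\ast}$ cases, where the extra $\delta$ coming from $|P_{\delta}(x)|=|x|+\delta$ and the sign conventions of \eqref{coadjunta} produce several similar-looking but differently signed terms; however, Lemma \ref{lema contexto} already packages the two most delicate cocycle-type identities in precisely the form required here, so the proof reduces to a careful but essentially mechanical computation.
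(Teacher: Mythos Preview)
Your proposal is correct and arrives at exactly the bracket \eqref{corchete en doble extension} and metric \eqref{metrica invariante en doble extension} that the paper writes down, but the organization differs. The paper does not verify Jacobi by exhausting over triples in $\a,\h,\mathbf{P}_{\delta}(\a)^{\ast}$; instead it first observes that $\Phi_{\delta}$ is a $2$-cocycle (by \eqref{Phi es cociclo}), builds the central extension $(\h\oplus\mathbf{P}_{\delta}(\a)^{\ast},[\,\cdot\,,\,\cdot\,]')$, then packages $\rho,\chi_{\delta},\ad_{\delta}^{\ast}$ into a single map $\Theta:\a\to\Der(\h\oplus\mathbf{P}_{\delta}(\a)^{\ast})$ and $\lambda,\omega_{\delta}$ into $\Lambda$, and finally invokes the generalized semi-direct product \eqref{semidirect 1}--\eqref{semidirect 3}. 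This buys a cleaner bookkeeping: the Jacobi identity is inherited from the semi-direct product machinery once \eqref{teorema DEX2}--\eqref{teorema DEX3} are checked, and those in turn reduce to \eqref{deh1}--\eqref{deh3} and Lemma \ref{lema contexto}. Your direct approach is more elementary and self-contained, at the cost of more sign-chasing. One small slip: the super cyclic condition \eqref{super ciclica} is not needed for the Jacobi identity on three $\a$-factors (that is entirely \eqref{deh3}); rather, \eqref{super ciclica} is exactly what makes $B_{\g}$ invariant on a triple $(x,y,z)\in\a^{3}$, a case your invariance paragraph omits. All ingredients are present in your sketch, just attributed to the wrong verification.
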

\begin{proof}
By \textbf{Lemma \ref{lema contexto}}, the bilinear map $\Phi_{\delta}:\h \times \h \to \mathbf{P}_{\delta}(\a)^{\ast}$, is a 2-cocycle of $(\h,[\,\cdot\,,\,\cdot\,]_{\h})$ with values on the trivial $\h$-module $\mathbf{P}_{\delta}(\a)^{\ast}$. Let us consider the bracket $[\,\cdot\,,\,\cdot\,]^{\prime}$ on the super-space $\h \oplus \mathbf{P}_{\delta}(\a)^{\ast}$, defined by: $[u+P_{\delta}(\alpha),v+P_{\delta}(\beta)]^{\prime}=[u,v]_{\h}+\Phi_{\delta}(u,v)$, where $u+P_{\delta}(\alpha)$ and $v+P_{\delta}(\beta)$ are elements in $\h \oplus \mathbf{P}_{\delta}(\a)^{\ast}$. 
\smallskip

For each $x$ in $\a$, consider the map $\Theta(x):\h \oplus \mathbf{P}_{\delta}(\a)^{\ast} \to \h \oplus \mathbf{P}_{\delta}(\a)^{\ast}$, defined by:
\begin{equation}\label{teorema DEX1}
\Theta(x)(u+P_{\delta}(\alpha))=\rho(x)(u)+\ad_{\delta}^{\ast}(x)(P_{\delta}(\alpha))+\chi_{\delta}(x,u).
\end{equation}
Using that $\rho(x)$ is a derivation of $(\h,[\,\cdot\,,\,\cdot\,]_{\h})$, $\ad_{\delta}^{\ast}:\a \to \gl(\mathbf{P}_{\delta}(\a)^{\ast})$ is a representation, and the expression \eqref{lema 1}, it follows that $\Theta(x)$ is a derivation of $(\h \oplus \mathbf{P}_{\delta}(\a)^{\ast},[\,\cdot\,,\,\cdot\,]^{\prime})$, of degree $|x|$. Hence, we have the even linear map $\Theta:\a \to \Der(\h \oplus \mathbf{P}_{\delta}(\a)^{\ast})$, $x \mapsto \Theta(x)$. 
\smallskip

Let $\Lambda:\a \times \a \to \h \oplus \mathbf{P}_{\delta}(\a)^{\ast}$, be defined by: $\Lambda(x,y)=\lambda(x,y)+\omega_{\delta}(x,y)$, for all $x,y$ in $\a$. 
By \eqref{deh1} and \eqref{lema 2}, it follows:
\begin{equation}\label{teorema DEX2}
[\Theta(x),\Theta(y)]_{\gl(\h \oplus \mathbf{P}_{\delta}(\a)^{\ast})}-\Theta([x,y]_{\a})=\ad_{\h \oplus \mathbf{P}_{\delta}(\a)^{\ast}}(\Lambda(x,y)).
\end{equation}
Using \eqref{deh2} and \eqref{deh3} we get:
\begin{equation}\label{teorema DEX3}
\sum_{\text{cyclic}}(-1)^{|x||z|}\left(\Theta(x)(\Lambda(y,z))+\Lambda(x,[y,z]_{\a})\right)=0,
\end{equation}
for all $x,y,z$ in $\a$. By \eqref{teorema DEX2} and \eqref{teorema DEX3}, we consider the generalized semi-direct product $\g=\a \oplus \left(\h \oplus \mathbf{P}_{\delta}(\a)^{\ast}\right)$, of $\h \oplus \mathbf{P}_{\delta}(\a)^{\ast}$ by $\a$, by means of $(\Theta,\Lambda)$. Then the bracket $[\,\cdot\,,\,\cdot\,]$ of $\g$ takes the form:
\begin{equation}\label{corchete en doble extension}
\begin{split}
x,y \in \a,\,\quad & \,[x,y]=[x,y]_{\a}+\lambda(x,y)+\omega_{\delta}(x,y),\\
x \in \a,\,u \in \h,\quad & \,[x,u]=\rho(x)(u)+\chi_{\delta}(x,u),\\
u,v \in \h, \quad & \,[u,v]=[u,v]_{\h}+\Phi_{\delta}(u,v),\\
x \in \a,\,\alpha \in \a^{\ast},\quad &\,[x,P_{\delta}(\alpha)]=\ad_{\delta}^{\ast}(x)(P_{\delta}(\alpha))
\end{split}
\end{equation}
Let $x+u+P_{\delta}(\alpha),y+v+P_{\delta}(\beta)$ be homogeneous elements of $\g$. We define the non-degenerate, super symmetric bilinear form $B$ on $\g$ by:
\begin{equation}\label{metrica invariante en doble extension}
B(x+u+P_{\delta}(\alpha),y+v+P_{\delta}(\beta))=(-1)^{|x||\beta|}\beta(x)+B_{\h}(u,v)+\alpha(y).
\end{equation}
Then $B$ is invariant with $|B|=\delta$, and $(\g,[\,\cdot\,,\,\cdot\,],B)$ is a quadratic Lie super algebra of degree $\delta$.
\end{proof}
\section{Indecomposable Homogeneous quadratic Lie super algebras}

A quadratic Lie super algebra $\g$ is \textbf{decomposable} if there is a non-zero ideal $I \neq \g$, such that $\g=I \oplus I^{\perp}$. Otherwise, $\g$ is \textbf{indecomposable}.
\smallskip

Let $(\mathfrak{g},[\,\cdot\,,\,\cdot\,],B)$ be an indecomposable quadratic Lie algebra 
of degree $\delta$, no simple and $\dim\g>1$, over an algebraically closed field $\F$ of zero characteristic. Let $\mathcal{I}$ be a minimal ideal of 
$\mathfrak{g}$, then $\mathcal{I}\cap \mathcal{I}^{\perp} \neq \{0\}$ and consequently: $\mathcal{I} \subset \mathcal{I}^{\perp} \subset C_{\mathfrak{g}}(\mathcal{I})$; thus, 
$\mathcal{I}$ is Abelian. 
\smallskip

Let $\mathfrak{h}$ be a super subspace of $\mathfrak{g}$ such that: $\mathcal{I}^{\perp}=\mathfrak{h} \oplus \mathcal{I}$. Let $\mathfrak{a}$ be a super subspace satisfying: $\mathfrak{h}^{\perp}=\mathfrak{a} \oplus \mathcal{I}$. Then $\mathfrak{h}$ is a non-degenerate subspace 
of $\mathfrak{g}$ such that: $\mathfrak{g}=\mathfrak{h} \oplus \mathfrak{h}^{\perp}=\a \oplus \mathcal{I}^{\perp}$ and $\mathcal{I}^{\perp} \cap \mathfrak{h}^{\perp}=\mathcal{I}$.

\begin{claim}\label{claimJunio10}{\sl
There are bijective linear maps $\xi_{\delta}:\mathcal{I} \to \mathbf{P}_{\delta}(\a)^{\ast}$ and $\xi:\mathcal{I} \to \a^{\ast}$, such that:
$$
\textbf{(i)}\,\,|\xi_{\delta}|=0,\quad \textbf{(ii)}\,\,|\xi|=\delta,\quad \textbf{(iii)}\,\,P_{\delta}(\xi)=\xi_{\delta}.
$$  
}
\end{claim}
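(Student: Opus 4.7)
The plan is to build $\xi$ directly from the invariant metric $B$ restricted to $\I\times\a$, and then obtain $\xi_{\delta}$ by applying the change-of-parity operator via \textbf{Prop.~\ref{propiedades de P}}.

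First, I would unpack the decomposition $\g=\h\oplus\a\oplus\I$, which follows from $\I^{\perp}=\h\oplus\I$ and $\g=\a\oplus\I^{\perp}$. Because $\h$ is non-degenerate we have $B(\h,\h)$ non-degenerate; because $\h\subset\I^{\perp}$ and $\a\subset\h^{\perp}$ we have $B(\h,\I)=0$ and $B(\a,\h)=0$; and because $\I\subset\I^{\perp}$ we have $B(\I,\I)=0$. Non-degeneracy of $B$ on $\g$ then forces the pairing $B\colon\a\times\I\to\F$ to be non-degenerate, and in particular a dimension count (using $\dim\I^{\perp}=\dim\g-\dim\I$ combined with $\dim\I^{\perp}=\dim\h+\dim\I$ and the analogous identity for $\h^{\perp}$) gives $\dim\a=\dim\I$.

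Next, I would define $\xi\colon\I\to\a^{\ast}$ by $\xi(z)(x)=B(z,x)$ for $z\in\I$, $x\in\a$. For homogeneous $z$ with $|z|=\eta$, the value $B(z,x)$ vanishes unless $|x|=\eta+\delta$, so $\xi(z)\in(\a^{\ast})_{\eta+\delta}$, giving $|\xi|=\delta$, which is statement \textbf{(ii)}. Injectivity of $\xi$ is immediate: if $\xi(z)=0$ then $B(z,\a)=0$, and combined with the already-known vanishings $B(z,\h)=0$ and $B(z,\I)=0$ we get $B(z,\g)=0$, so $z=0$. Together with the dimension equality $\dim\I=\dim\a=\dim\a^{\ast}$ this makes $\xi$ bijective.

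Finally, I would set $\xi_{\delta}:=P_{\delta}(\xi)$. By \textbf{Prop.~\ref{propiedades de P}.(ii) and (iv)} one has
\[
\mathbf{P}_{\delta}(\Hom(\I,\a^{\ast}))=\Hom(\I,\mathbf{P}_{\delta}(\a^{\ast}))=\Hom(\I,\mathbf{P}_{\delta}(\a)^{\ast}),
\]
so $\xi_{\delta}$ is a well-defined element of $\Hom(\I,\mathbf{P}_{\delta}(\a)^{\ast})$. Since applying $P_{\delta}$ shifts degree by $\delta$, we get $|\xi_{\delta}|=|\xi|+\delta=2\delta=0$ in $\Z_2$, which proves \textbf{(i)}. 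Bijectivity of $\xi_{\delta}$ is inherited from that of $\xi$, because $P_{\delta}$ is a linear isomorphism of vector spaces. Property \textbf{(iii)} is just the definition $\xi_{\delta}=P_{\delta}(\xi)$.

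The only real subtlety is keeping track of where $P_{\delta}$ is applied (on the target space $\a^{\ast}$ versus inside the $\Hom$), but \textbf{Prop.~\ref{propiedades de P}.(ii)} makes this a bookkeeping issue rather than a substantive one; the whole claim is essentially a structural translation of the non-degenerate pairing $B\colon\a\times\I\to\F$ into the language of $\mathbf{P}_{\delta}$.
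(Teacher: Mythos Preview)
Your proof is correct and follows essentially the same approach as the paper: both define the maps via $\xi(\alpha)(x)=B(\alpha,x)$ and $\xi_{\delta}(\alpha)(P_{\delta}(x))=B(\alpha,x)$, and both establish bijectivity by combining injectivity (from non-degeneracy of $B$) with the dimension equality $\dim\a=\dim\I$. The only organizational difference is that the paper defines $\xi_{\delta}$ first by an explicit formula and then introduces $\xi$, whereas you define $\xi$ first and obtain $\xi_{\delta}$ as $P_{\delta}(\xi)$; this reversal is immaterial.
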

\begin{proof}
Let us consider the 
linear map $\xi_{\delta}:\mathcal{I} \rightarrow  \mathbf{P}_{\delta}(\mathfrak{a})^{\ast}$, defined by:
\begin{equation}\label{a9}
\xi_{\delta}(\alpha)\left(P_{\delta}(x)\right)=B(\alpha,x),\,\,\text{ for all }\alpha \in \mathcal{I},\,\text{ and }x \in \a.
\end{equation}
It is a straightforward to verify that $|\xi_{\delta}|=0$. Due to $B$ is non-degenerate, $\xi_{\delta}$ is injective, then: $\dim \mathcal{I} \leq \dim \mathbf{P}_{\delta}(\a)=\dim \a$. We know that $\dim(\mathcal{I}^{\perp}) \geq \dim(\mathfrak{g})-\dim(\mathcal{I})$, 
then $\dim(\mathcal{I}) \geq \dim(\mathfrak{a})$, and $\dim \mathcal{I}=\dim\a$.  Therefore, $\mathbf{P}_{\delta}(\mathfrak{a})^{\ast}$ is isomorphic to $\mathcal{I}$, via the map $\xi_{\delta}$. By {\bf Prop. \ref{proposicion 1s}} (see Appendix), we assume that $\mathfrak{a}$ is isotropic. Thus, we have the following decomposition of 
$\mathfrak{g}$: 
$$
\mathfrak{g}=\mathfrak{a} \oplus \mathfrak{h} \oplus \mathcal{I},\,\text{ where }\mathfrak{h}^{\perp}=\mathfrak{a} \oplus \mathcal{I},\,\text{ and }\mathbf{P}_{\delta}(\a)^{\ast} \text{ is isomorphic to } \mathcal{I}.
$$
If $\xi:\mathcal{I} \rightarrow \mathfrak{a}^{*}$ is the linear map defined by 
\begin{equation}\label{xi sin delta}
\xi(\alpha)(x)=B(\alpha,x),\,\,\text{ for all }\alpha \in \mathcal{I}, \text{ and }x \in \a,
\end{equation}
then $|\xi|=\delta$. In addition, by \textbf{Prop. \ref{propiedades de P}.(ii)}, $P_{\delta}(\xi)=\xi_{\delta}$.
\end{proof}

Let $B_{\mathfrak{h}}$ be the restriction of $B$ to $\h \times \h$. Using that $\g=\h \oplus \h^{\perp}$, and the even linear map $\xi_{\delta}$, the invariant metric $B$ takes the form:
\begin{equation}\label{descripcion de MI}
\begin{split}
& B(x+u+\alpha,y+v+\beta)=\\
&=(-1)^{|x||\beta|}\xi_{\delta}(\beta)(P_{\delta}(x))+\xi_{\delta}(\alpha)(y)+B_{\h}(u,v),
\end{split}
\end{equation}
where $x+u+\alpha$ and $y+v+\beta$ are homogeneous elements of $\g$.
\smallskip

Now we shall give a description of the bracket $[\,\cdot\,,\,\cdot\,]$, according to the decomposition $\g=\a \oplus \h \oplus \mathcal{I}$. 
\smallskip

Since $[\a,\mathcal{I}] \subset \mathcal{I}$, then for each $x$ in $\a$, there is a linear map $\sigma(x):\mathcal{I} \to \mathcal{I}$, such that: 
\begin{equation}\label{def representacion sigma}
\sigma(x)(\alpha)=[x,\alpha],\,\,\,\text{ for all }\,\alpha \in \mathcal{I}.
\end{equation}
Due to $[\,\cdot\,,\,\cdot\,]$ is even, the map $\sigma(x)$ is even. Thus \eqref{def representacion sigma} yields the even linear map $\sigma:\a \to \gl(\mathcal{I})$, $x \mapsto \sigma(x)$.
\smallskip

Let $x,y$ be in $\mathfrak{a}$. There are $[x,y]_{\mathfrak{a}}$ in $\mathfrak{a}$,
$\lambda(x,y)$ in $\mathfrak{h}$ and $\mu(x,y)$ in $\mathcal{I}$, such that: 
\begin{equation}\label{a no es subalgebra}
[x,y]=[x,y]_{\mathfrak{a}}+\lambda(x,y)+\mu(x,y).
\end{equation}
The properties of the Lie super bracket $[\,\cdot\,,\,\cdot\,]$, imply that the following maps are even, super skew-symmetric and bilinear: 
$$
\aligned
& \begin{array}{rccl}
[\,\cdot\,,\,\cdot\,]_{\mathfrak{a}}:&\mathfrak{a} \times \mathfrak{a}&
\longrightarrow &\,\,\mathfrak{a}\\
& (x,y) & \mapsto & [x,y]_{\a}
\end{array}\qquad 
 \begin{array}{rccl}
\lambda:&\mathfrak{a} \times \mathfrak{a}& \longrightarrow &\,\,\mathfrak{h}\\
& (x,y) & \mapsto & \lambda(x,y),
\end{array}\\
& \begin{array}{rccl}
\mu:&\mathfrak{a} \times \mathfrak{a}&
\longrightarrow & \,\,\mathcal{I}\\
& (x,y) & \mapsto & \mu(x,y)
\end{array}
\endaligned
$$

Let $x$ be in $\mathfrak{a}$ and $u$ be in $\mathfrak{h}$. Since $[\g,\h] \subset \mathcal{I}^{\perp}=\h \oplus \mathcal{I}^{\perp}$, there are $\rho(x,u)$ in $\mathfrak{h}$ and $\tau(x,u)$ in $\mathcal{I}$, such that:
\begin{equation}\label{1t}
[x,u]=\rho(x,u)+\tau(x,u).
\end{equation}
As the Lie super bracket $[\,\cdot\,,\,\cdot\,]_{\h}$ is even, then \eqref{1t} yields the even linear maps: $\rho(x):\h \to \h$, $u \mapsto \rho(x,u)$, and $\tau(x): \a \to \h$, $u \mapsto \tau(x,u)$, for all $x$ in $\a$ and $u$ in $\h$. Then, 
$$
\begin{array}{rccl}
\rho:&\a&\longrightarrow & \End(\h)\\
& x & \mapsto & \rho(x)
\end{array}
\quad \text{\and}\quad
\begin{array}{rccl}
\tau:&\a&\longrightarrow & \Hom(\h;\mathcal{I})\\
& x & \mapsto & \tau(x)
\end{array}
$$
are even linear maps. 
\smallskip

Let $u,v$ be in $\mathfrak{h}$. Since $\mathcal{I}^{\perp}=\h \oplus \mathcal{I}$, then there are $[u,v]_{\mathfrak{h}}$ in $\mathfrak{h}$ and $\gamma(u,v)$ in
$\mathcal{I}$, such that:
\begin{equation}\label{4}
[u,v]=[u,v]_{\mathfrak{h}}+\gamma(u,v)
\end{equation}
Let $[\,\cdot\,,\,\cdot\,]_{\h}:\h \times \h$ be the map $(u,v) \mapsto [u,v]_{\h}$ and let $\gamma:\h \times \h \to \mathcal{I}$ be the map $(u,v) \mapsto \gamma(u,v)$. Then, $[\,\cdot\,,\,\cdot\,]_{\h}$ and $\gamma$ are even, super skew-symmetric and bilinear.
\smallskip

In summary, the bracket $[\,\cdot\,,\,\cdot\,]$ of $\g$ can be described by the maps $\sigma,\lambda,\mu,\rho,\tau,[\,\cdot\,,\,\cdot\,]_{\h},\gamma$, as follows:
\begin{equation}\label{viernes7Junio}
\begin{split}
x,y \in \a,\,\quad & [x,y]=[x,y]_{\a}+\lambda(x,y)+\mu(x,y),\\
x \in \a,\,\,u \in \h,\,\quad & [x,u]=\rho(x)(u)+\tau(x)(u),\\
u,v \in \h,\,\quad & [u,v]=[u,v]_{\h}+\gamma(u,v),\\
x \in \a,\,\, \alpha \in \mathcal{I},\,\quad &[x,\alpha]=\sigma(x)(\alpha).
\end{split}
\end{equation}

\begin{claim}\label{a es superalgebra}{\sl
The pair $(\a,[\,\cdot\,,\,\cdot\,]_{\a})$ is a Lie super algebra satisfying:
\begin{align}
\label{17} & \sum_{\text{cyclic}}(-1)^{|x||z|}\left(\lambda(x,[y,z]_{\mathfrak{a}})+\rho(x)(\lambda(y,z))\right)=0,\,\,\text{ and }\\
\label{18} & \sum_{\text{cyclic}}(-1)^{|x||z|}\left(\mu(x,[y,z]_{\mathfrak{a}})+\tau(x)\left(\lambda(y,z)\right)+\sigma(x)(\mu(y,z))\right)=0.
\end{align}
}
\end{claim}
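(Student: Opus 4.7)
The plan is to read everything off from the Jacobi super identity of $(\g,[\,\cdot\,,\,\cdot\,])$ applied to three elements $x,y,z$ of $\a$, then project the resulting identity onto the three summands of the decomposition $\g=\a\oplus\h\oplus\mathcal{I}$.

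First, from the fact that $[\,\cdot\,,\,\cdot\,]$ is bilinear and super skew-symmetric on $\g$, I would observe that its $\a$-component $[\,\cdot\,,\,\cdot\,]_{\a}$, obtained by projection onto $\a$ along $\h\oplus\mathcal{I}$, is automatically bilinear and super skew-symmetric on $\a$. So only the Jacobi super identity for $[\,\cdot\,,\,\cdot\,]_{\a}$ remains to be checked, and the heart of the argument reduces to one computation.

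Next, for homogeneous $x,y,z$ in $\a$ I would expand $[x,[y,z]]$ using \eqref{viernes7Junio}. Writing $[y,z]=[y,z]_{\a}+\lambda(y,z)+\mu(y,z)$ and then using the first line of \eqref{viernes7Junio} on the $\a$-part, the second line on the $\h$-part $\lambda(y,z)$, and the fourth line on the $\mathcal{I}$-part $\mu(y,z)$, I get
\begin{equation*}
\begin{split}
[x,[y,z]]&=[x,[y,z]_{\a}]_{\a}+\lambda(x,[y,z]_{\a})+\mu(x,[y,z]_{\a})\\
&\quad +\rho(x)(\lambda(y,z))+\tau(x)(\lambda(y,z))+\sigma(x)(\mu(y,z)).
\end{split}
\end{equation*}
Taking the cyclic sum with the standard signs $(-1)^{|x||z|}$ and using the Jacobi super identity \eqref{Jacobi super identity} for $\g$, the total vanishes in $\g=\a\oplus\h\oplus\mathcal{I}$.

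Since this decomposition is a direct sum of super-spaces, each of the three components must vanish separately. The $\a$-component gives
$$\sum_{\text{cyclic}}(-1)^{|x||z|}[x,[y,z]_{\a}]_{\a}=0,$$
which is the Jacobi super identity for $[\,\cdot\,,\,\cdot\,]_{\a}$; together with the super skew-symmetry noted above, this shows that $(\a,[\,\cdot\,,\,\cdot\,]_{\a})$ is a Lie super algebra. The $\h$-component yields exactly \eqref{17}, and the $\mathcal{I}$-component yields exactly \eqref{18}. I do not foresee a real obstacle here: the only subtle point is making sure the signs are the standard Jacobi signs in each projection, but this is automatic because the projectors onto $\a$, $\h$ and $\mathcal{I}$ are even linear maps and so commute with the scalar factors $(-1)^{|x||z|}$.
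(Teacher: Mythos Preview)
Your proposal is correct and follows essentially the same approach as the paper: expand $[x,[y,z]]$ for $x,y,z\in\a$ via \eqref{viernes7Junio} (this is exactly the paper's display \eqref{16}), take the signed cyclic sum, apply the Jacobi super identity in $\g$, and read off the three conclusions from the components in $\a$, $\h$, and $\mathcal{I}$. Your added remarks about super skew-symmetry of $[\,\cdot\,,\,\cdot\,]_{\a}$ and evenness of the projections are fine supplementary observations but not a different route.
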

\begin{proof}
Let $x,y,z$ in $\mathfrak{a}$; by \eqref{viernes7Junio} we obtain:
\begin{equation}\label{16}
\begin{split}
[x,[y,z]]&=\underbrace{[x,[y,z]_{\mathfrak{a}}]_{\mathfrak{a}}}_{\in \a}+\underbrace{\lambda(x,[y,z]_{\mathfrak{a}})+\rho(x)\left(\lambda(y,z)\right)}_{\in \h}\\
\,&+\underbrace{\mu\left(x,[y,z]_{\mathfrak{a}}\right)+\tau(x)\left(\lambda(y,z)\right)+\sigma(x)\left(\mu(y,z)\right)}_{\in \mathcal{I}}.
\end{split}
\end{equation}
Uisng \eqref{16} and the Jacobi super identity of $[\,\cdot\,,\,\cdot\,]$ on $x,y,z$, we get that $(\mathfrak{a},[\,\cdot\,,\,\cdot\,]_{\mathfrak{a}})$ is a Lie super algebra satisfying \eqref{17} and \eqref{18}. 
\end{proof}

\begin{claim}\label{sigma es representacion}{\sl
The map $\sigma:\a \to \mathfrak{gl}(\mathcal{I})$ is a Lie super algebra representation of $(\a,[\,\cdot\,,\,\cdot\,]_{\a})$ on $\mathcal{I}$, isomorphic to the $\delta$-coadjoint representation $\ad_{\delta}^{\ast}:\a \to \gl(\textbf{P}_{\delta}(\a)^{\ast})$, via the map $\xi_{\delta}$, that is:
\begin{equation}\label{28}
\xi_{\delta} \circ \sigma(x)=\ad_{\boldsymbol{\delta}}^{\ast}(x)\circ \xi_{\delta},\quad \text{ for all } x \in \mathfrak{a}.
\end{equation}}
\end{claim}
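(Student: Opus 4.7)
The plan is to establish the intertwining identity \eqref{28} by a direct computation, evaluating both sides on an arbitrary element $P_{\delta}(y)$ with $y \in \a$, and then deduce that $\sigma$ preserves brackets as a formal consequence of \eqref{28}, the bijectivity of $\xi_{\delta}$, and the fact that $\ad_{\delta}^{\ast}$ is already known to be a representation by \textbf{Prop. \ref{prop delta coadjunta}}.

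For the computational step, fix $x \in \a$, $\alpha \in \mathcal{I}$ and $y \in \a$. The left-hand side of \eqref{28} unwinds via \eqref{def representacion sigma} and \eqref{a9} to $B([x,\alpha],y)$. Invariance of $B$ rewrites this as $-(-1)^{|x||\alpha|}B(\alpha,[x,y])$, and since $\alpha \in \mathcal{I}$ is orthogonal to $\mathcal{I}^{\perp} = \h \oplus \mathcal{I}$ only the $\a$-component of $[x,y]$ contributes, leaving
$$\xi_{\delta}(\sigma(x)(\alpha))(P_{\delta}(y)) = -(-1)^{|x||\alpha|}B(\alpha,[x,y]_{\a}).$$
For the right-hand side, \eqref{coadjunta} combined with $\xi_{\delta} = P_{\delta}(\xi)$ and $|\xi(\alpha)| = |\alpha| + \delta$ gives
$$\ad_{\delta}^{\ast}(x)(\xi_{\delta}(\alpha))(P_{\delta}(y)) = -(-1)^{(|\alpha|+2\delta)|x|}\,\xi(\alpha)([x,y]_{\a}) = -(-1)^{|\alpha||x|}B(\alpha,[x,y]_{\a}),$$
matching the left-hand side. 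Since $y$ was arbitrary, \eqref{28} follows.

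To close the claim, I would apply $\xi_{\delta}$ to $[\sigma(x),\sigma(y)](\alpha)$ and push \eqref{28} through twice to obtain $[\ad_{\delta}^{\ast}(x),\ad_{\delta}^{\ast}(y)]\,\xi_{\delta}(\alpha)$; because $\ad_{\delta}^{\ast}$ is a representation this equals $\ad_{\delta}^{\ast}([x,y]_{\a})\,\xi_{\delta}(\alpha) = \xi_{\delta}(\sigma([x,y]_{\a})(\alpha))$, and the injectivity of $\xi_{\delta}$ yields $[\sigma(x),\sigma(y)] = \sigma([x,y]_{\a})$, confirming that $\sigma$ is indeed a representation of $(\a,[\,\cdot\,,\,\cdot\,]_{\a})$ on $\mathcal{I}$.

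The main obstacle will be the sign bookkeeping in the central calculation — specifically, checking that the exponent $(|f|+\delta)|x|$ appearing in \eqref{coadjunta} combines with the parity shift $|\xi(\alpha)| = |\alpha|+\delta$ to reproduce, uniformly in $\delta$, the factor $(-1)^{|x||\alpha|}$ produced by the invariance of $B$. It is also worth noting that the representation property cannot be read off directly from Jacobi on $\g$: evaluating the super Jacobi identity at $(x,y,\alpha)$ with $x,y\in\a$ and $\alpha\in\mathcal{I}$ leaves a residual term $[\lambda(x,y),\alpha]$, so the argument above implicitly yields the nontrivial identity $[\lambda(x,y),\mathcal{I}]=0$ as a byproduct.
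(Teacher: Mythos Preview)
Your proof is correct and follows essentially the same route as the paper: compute both sides of \eqref{28} on $P_{\delta}(y)$ using invariance of $B$ on the left and the definition \eqref{coadjunta} together with $|\xi(\alpha)|=|\alpha|+\delta$ on the right, then invoke bijectivity of $\xi_{\delta}$ and \textbf{Prop.~\ref{prop delta coadjunta}} to conclude that $\sigma$ is a representation. One small remark on your closing observation: the identity $[\lambda(x,y),\mathcal{I}]=0$ is not a byproduct here but is already available from $\mathcal{I}^{\perp}\subset C_{\g}(\mathcal{I})$ (stated just before \textbf{Claim~\ref{claimJunio10}}), so Jacobi on $(x,y,\alpha)$ would in fact yield the representation property directly as well.
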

\begin{proof}
Let $x,y$ be in $\mathfrak{a}$ and $\alpha$ be in $\mathcal{I}$. Using that $B$ is invariant and that $[\,\cdot\,,\,\cdot\,]$ is super skew-symmetric, we get:
\begin{equation}\label{coadjunta-1}
\xi_{\delta}\left(\sigma(x)(\alpha)\right)(P_{\delta}(y))=-(-1)^{|x||\alpha|}B(\alpha,[x,y]_{\mathfrak{a}}).
\end{equation}
Since $\xi_{\delta}(\alpha)=P_{\delta}\left(\xi(\alpha)\right)$ and $|\xi|=\delta$ (see \textbf{Claim \ref{claimJunio10}.(iii)}), we have:
\begin{equation}\label{coadjunta-2}
\begin{split}
& \ad_{\boldsymbol{\delta}}^{\ast}(x)\left(\xi_{\delta}(\alpha)\right)\left(P_{\delta}(y)\right)=\ad_{\delta}^{\ast}(x)\left(P_{\delta}(\xi(\alpha))\right)\left(P_{\delta}(y)\right)\\
& =-(-1)^{(|\xi(\alpha)|+\delta)|x|}\xi(\alpha)\left([x,y]_{\mathfrak{a}}\right)=-(-1)^{(\delta+|\alpha|+\delta)|x|}\xi(\alpha)\left([x,y]_{\mathfrak{a}}\right)\\
&=-(-1)^{|x||\alpha|}B(\alpha,[x,y]_{\a}).
\end{split}
\end{equation}
From \eqref{coadjunta-1} and \eqref{coadjunta-2}, we deduce \eqref{28}. Due to $\ad_{\delta}^{\ast}:\a \to \gl(\mathbf{P}_{\delta}(\a)^{\ast})$ is a representation of $(\a,[\,\cdot\,,\,\cdot\,]_{\a})$ (see \textbf{Prop. \ref{prop delta coadjunta}}) and $\xi_{\delta}$ is bijective, by \eqref{28} we deduce that $\sigma:\a \to \gl(\mathcal{I})$ is a representation of $(\a,[\,\cdot\,,\,\cdot\,]_{\a})$.
\end{proof}

\begin{claim}\label{condicion ciclica para mu}{\sl
Let $\mu_{\delta}=\xi_{\delta}\circ \mu:\a \times \a \to \mathbf{P}_{\delta}(\a)^{\ast}$. Then, $\mu_{\delta}(x,y)(P_{\delta}(z))=(-1)^{|x|(|y|+|z|)}\mu_{\delta}(y,z)(P_{\delta}(x))$, for all $x,y,z$ in $\a$.}
\end{claim}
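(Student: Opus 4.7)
The plan is to reduce the cyclic identity to the invariance plus super-symmetry of $B$, after isolating the $\mu$-component using the orthogonality relations that were set up just before the claim.

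First I would unpack the left-hand side. By the definition of $\mu_{\delta}$ and $\xi_{\delta}$ (see \eqref{a9}),
$$
\mu_{\delta}(x,y)(P_{\delta}(z))=\xi_{\delta}(\mu(x,y))(P_{\delta}(z))=B(\mu(x,y),z).
$$
I then want to upgrade this to $B([x,y],z)$ for the full bracket. From \eqref{a no es subalgebra} we have $[x,y]=[x,y]_{\a}+\lambda(x,y)+\mu(x,y)$, and the key observation is that, after the reduction in \textbf{Prop. \ref{proposicion 1s}}, $\a$ is isotropic and $\mathfrak{h}^{\perp}=\a\oplus\mathcal{I}$; in particular $B(\a,\a)=0$ and $B(\h,\a)=0$. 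Hence the $\a$- and $\h$-components of $[x,y]$ annihilate $z\in\a$ under $B$, giving
$$
B([x,y],z)=B(\mu(x,y),z)=\mu_{\delta}(x,y)(P_{\delta}(z)).
$$

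Next I would apply invariance of $B$ to get $B([x,y],z)=B(x,[y,z])$. Expanding $[y,z]$ by \eqref{a no es subalgebra} again and using the same orthogonality relations $B(\a,\a)=B(\a,\h)=0$, only the $\mu$-component survives:
$$
B(x,[y,z])=B(x,\mu(y,z)).
$$
Since $\mu$ is even, $|\mu(y,z)|=|y|+|z|$, so super-symmetry of $B$ yields
$$
B(x,\mu(y,z))=(-1)^{|x|(|y|+|z|)}B(\mu(y,z),x)=(-1)^{|x|(|y|+|z|)}\mu_{\delta}(y,z)(P_{\delta}(x)),
$$
which finishes the proof.

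There is no substantive obstacle here: the argument is entirely a matter of bookkeeping the orthogonality decomposition $\g=\a\oplus\h\oplus\mathcal{I}$ together with isotropy of $\a$. The only point requiring mild care is the sign in super-symmetry, which comes out cleanly because $\mu(y,z)\in\mathcal{I}$ has degree $|y|+|z|$, matching exactly the exponent demanded by the claim.
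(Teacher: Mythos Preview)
Your proof is correct and follows essentially the same route as the paper: unwind $\mu_{\delta}$ via $\xi_{\delta}$, use the orthogonality encoded in the decomposition $\g=\a\oplus\h\oplus\mathcal{I}$ (equivalently \eqref{descripcion de MI}) to replace $\mu(x,y)$ by the full bracket $[x,y]$, then apply invariance and super-symmetry of $B$. The only difference is cosmetic---you spell out explicitly why the $\a$- and $\h$-components drop, whereas the paper cites \eqref{descripcion de MI} and \eqref{viernes7Junio} and applies super-symmetry to $B(x,[y,z])$ rather than to $B(x,\mu(y,z))$, which yields the same sign since $|[y,z]|=|y|+|z|$.
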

\begin{proof}
Using \eqref{descripcion de MI} and \eqref{viernes7Junio}, we obtain:
$$
\aligned
& \mu_{\delta}(x,y)(P_{\delta}(z))=B(\mu(x,y),z)=B([x,y],z)=B(x,[y,z])\\
&=(-1)^{|x|(|y|+|z|)}B([y,z],x)=(-1)^{|x|(|y|+|z|)}B(\mu(y,z),x)\\
&=(-1)^{|x|(|y|+|z|)}\mu_{\delta}(y,z)(x),
\endaligned
$$
which proves our claim.
\end{proof}

\begin{claim}\label{tau-lambda}{\sl
Let $\chi_{\delta}:\a \times \h \to \mathbf{P}_{\delta}(\a)^{\ast}$, be defined by $\chi_{\delta}(x,u)=\xi_{\delta}(\tau(x)(u))$. Then $\chi_{\delta}(x,u)(P_{\delta}(y))=-(-1)^{|y||u|}B_{\h}(\lambda(x,y),u)$, for all $x,y$ in $\a$ and $u$ in $\h$.}
\end{claim}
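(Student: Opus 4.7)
The plan is to unpack the definition of $\chi_{\delta}$ via $\xi_{\delta}$, translate it into a statement about the invariant metric $B$, and then extract the desired formula by applying invariance, super skew-symmetry, and the orthogonality relations among $\a$, $\h$, and $\mathcal{I}$.

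First I would observe that, by the definition of $\xi_{\delta}$ in \eqref{a9}, we have $\chi_{\delta}(x,u)(P_{\delta}(y)) = \xi_{\delta}(\tau(x)(u))(P_{\delta}(y)) = B(\tau(x)(u),y)$. The bracket decomposition \eqref{1t} gives $[x,u] = \rho(x)(u) + \tau(x)(u)$ with $\rho(x)(u) \in \h$ and $\tau(x)(u)\in\mathcal{I}$. Since $\mathfrak{h}^{\perp} = \a \oplus \mathcal{I}$, we have $B(\h,\a) = 0$, so $B(\rho(x)(u),y) = 0$ and therefore $B(\tau(x)(u),y) = B([x,u],y)$.

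Next I would apply the super skew-symmetry of $[\,\cdot\,,\,\cdot\,]$ and the invariance of $B$ to rewrite
\begin{equation*}
B([x,u],y) = -(-1)^{|x||u|}B([u,x],y) = -(-1)^{|x||u|}B(u,[x,y]).
\end{equation*}
Expanding $[x,y] = [x,y]_{\a} + \lambda(x,y) + \mu(x,y)$ according to \eqref{a no es subalgebra}, the orthogonality relations $B(\h,\a) = 0$ and $B(\h,\mathcal{I}) = 0$ (the latter follows from $\mathcal{I} \subset \mathcal{I}^{\perp}$ and $\h \subset \mathcal{I}^{\perp}$ together with the description \eqref{descripcion de MI}) kill the $[x,y]_{\a}$ and $\mu(x,y)$ contributions. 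Thus $B(u,[x,y]) = B_{\h}(u,\lambda(x,y))$.

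Finally I would use super-symmetry of $B_{\h}$ and that $\lambda$ is even (so $|\lambda(x,y)| = |x|+|y|$) to write $B_{\h}(u,\lambda(x,y)) = (-1)^{|u|(|x|+|y|)}B_{\h}(\lambda(x,y),u)$. Collecting signs, the total prefactor is $-(-1)^{|x||u| + |u|(|x|+|y|)} = -(-1)^{|u||y|}$, yielding
\begin{equation*}
\chi_{\delta}(x,u)(P_{\delta}(y)) = -(-1)^{|y||u|}B_{\h}(\lambda(x,y),u),
\end{equation*}
as claimed. There is no genuine obstacle here beyond bookkeeping: the only thing to be careful about is the sign arithmetic in the last step and confirming that the $\h$--$\mathcal{I}$ and $\h$--$\a$ blocks of $B$ indeed vanish, which is immediate from the choice of the complement $\h$ inside $\mathcal{I}^{\perp}$ and of $\a$ inside $\mathfrak{h}^{\perp}$.
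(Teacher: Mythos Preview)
Your proof is correct and follows essentially the same route as the paper: unwind $\xi_{\delta}$ to get $B(\tau(x)(u),y)=B([x,u],y)$, apply super skew-symmetry and invariance of $B$ to reach $-(-1)^{|x||u|}B(u,[x,y])$, and then project onto $\h$ using the orthogonality of $\a$ and $\mathcal{I}$ with $\h$ before swapping arguments via super-symmetry of $B_{\h}$. The only difference is that you spell out the orthogonality justifications and the sign bookkeeping more explicitly than the paper does.
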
 
\begin{proof}
Using \eqref{descripcion de MI} and \eqref{viernes7Junio}, we obtain:
\begin{equation}\label{chi delta para teorema}
\begin{split}
& \chi_{\delta}(x,u)(P_{\delta}(y))=B(\tau(x)(u),y)=B([x,u],y)\\
&=-(-1)^{|x||u|}B(u,[x,y])=-(-1)^{|x||u|}B_{\h}(u,\lambda(x,y))\\
&=-(-1)^{|y||u|}B_{\h}(\lambda(x,y),u),
\end{split}
\end{equation}
which proves our result.
\end{proof}

\begin{claim}\label{h es cuadratica}{\sl
The triple $(\h,[\,\cdot\,,\,\cdot\,]_{\h},B_{\h})$ is a quadratic Lie super algebra of degree $\delta$.
}
\end{claim}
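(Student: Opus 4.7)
The plan is to verify each defining property of a quadratic Lie super algebra for $(\h,[\,\cdot\,,\,\cdot\,]_\h,B_\h)$ by projecting the corresponding property of $\g$ onto $\h$, using the decomposition $\g=\a\oplus\h\oplus\mathcal{I}$ together with two orthogonality facts that come from the setup.

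The two facts I would record at the outset are: (a) $B(\h,\mathcal{I})=0$, which is immediate because $\h\subset\mathcal{I}^\perp$; and (b) $[\h,\mathcal{I}]=0$. For (b), since $\mathcal{I}$ is a minimal ideal of the indecomposable $\g$, one has $\mathcal{I}\subset\mathcal{I}^\perp$, and then for any $x\in\mathcal{I}^\perp$, $\alpha\in\mathcal{I}$ and $y\in\g$, invariance gives $B([x,\alpha],y)=B(x,[\alpha,y])=0$ (as $[\alpha,y]\in\mathcal{I}$), so non-degeneracy of $B$ forces $[x,\alpha]=0$; in particular $[\h,\mathcal{I}]=0$.

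With these in hand the verification is almost mechanical. For $u,v,w\in\h$, expand
\[
[u,[v,w]]=[u,[v,w]_\h]+[u,\gamma(v,w)]=[u,[v,w]_\h]_\h+\gamma(u,[v,w]_\h),
\]
using $[u,\gamma(v,w)]=0$ by (b), and decompose the other two cyclic terms analogously. The Jacobi super identity for $[\,\cdot\,,\,\cdot\,]$ on $\g$ then splits, by the uniqueness of the decomposition $\g=\a\oplus\h\oplus\mathcal{I}$, into its $\h$-component
\[
\sum_{\text{cyclic}}(-1)^{|u||w|}[u,[v,w]_\h]_\h=0,
\]
which is precisely Jacobi for $[\,\cdot\,,\,\cdot\,]_\h$, and an $\mathcal{I}$-component that gives a cocycle relation for $\gamma$ (not needed here). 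Super skew-symmetry of $[\,\cdot\,,\,\cdot\,]_\h$ follows by projecting the super skew-symmetry of $[\,\cdot\,,\,\cdot\,]$ on $\h$ to its $\h$-component. Hence $(\h,[\,\cdot\,,\,\cdot\,]_\h)$ is a Lie super algebra.

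For the metric, $B_\h$ is super-symmetric, homogeneous of degree $\delta$, and non-degenerate, all by restriction from $B$ (the non-degeneracy is one of the standing hypotheses on the chosen $\h$). For invariance, given $u,v,w\in\h$ the invariance of $B$ in $\g$ yields
\[
B_\h([u,v]_\h,w)+B(\gamma(u,v),w)=B_\h(u,[v,w]_\h)+B(u,\gamma(v,w)),
\]
and by fact (a) both $B(\gamma(u,v),w)$ and $B(u,\gamma(v,w))$ vanish, giving invariance of $B_\h$. I do not foresee any real obstacle: the only subtle point is establishing $[\h,\mathcal{I}]=0$ and $B(\h,\mathcal{I})=0$ cleanly from the indecomposability/minimality hypotheses, after which every remaining claim is a one-line projection onto the $\h$-summand.
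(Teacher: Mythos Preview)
Your proof is correct and follows essentially the same approach as the paper: both arguments use $[\h,\mathcal{I}]=0$ (the paper records this earlier as $\mathcal{I}^\perp\subset C_\g(\mathcal{I})$) to reduce the Jacobi identity on $\h$ modulo $\mathcal{I}$, and use $\mathcal{I}\subset\h^\perp$ to pass from invariance of $B$ to invariance of $B_\h$. Your version is slightly more explicit in writing out the $\gamma$-terms and in noting super skew-symmetry, but the method is the same.
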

\begin{proof}
Let $u,v,w$ be in $\h$. By \eqref{viernes7Junio} and $[\h,\mathcal{I}]=\{0\}$, we obtain: $[u,[v,w]]=[u,[v,w]_{\h}]_{\h}\,\operatorname{mod}\mathcal{I}$. The Jacoby super identity of $[\,\cdot\,,\,\cdot\,]$ on $u,v,w$, implies that $(\h,[\,\cdot\,,\,\cdot\,]_{\h})$ is a Lie super algebra. 
\smallskip

Now we shall prove that $B_{\h}$ is invariant under $[\,\cdot\,,\,\cdot\,]_{\h}$. Since $\mathcal{I}\subset\h^{\perp}$, from \eqref{descripcion de MI} and \eqref{viernes7Junio} it follows: $B_{\h}(u,[v,w]_{\h})=B(u,[v,w])=B([u,v],w)=B_{\h}([u,v]_{\h},w)$. Then $B_{\h}$ is invariant under $[\,\cdot\,,\,\cdot\,]_{\h}$, and $(\h,[\,\cdot\,,\,\cdot\,]_{\h},B_{\h})$ is a quadratic Lie super algebra of degree $\delta$.
\end{proof}

\begin{claim}\label{rho es derivacion}{\sl
For each $x$ in $\a$, the map $\rho(x):\h \to \h$ is a $B_{\h}$-skew symmetric derivation of $(\h,[\,\cdot\,,\,\cdot\,]_{\h})$. 
} 
\end{claim}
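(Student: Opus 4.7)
The plan is to derive both properties (being a derivation and being $B_{\h}$-skew-symmetric) by expanding the relevant structural identities of $(\g,[\,\cdot\,,\,\cdot\,],B)$ and then projecting onto $\h$, using the decomposition $\g=\a\oplus\h\oplus\mathcal{I}$ and the orthogonality relations $\h^{\perp}=\a\oplus\mathcal{I}$, $\mathcal{I}^{\perp}=\h\oplus\mathcal{I}$.

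First I would establish (or recall) the crucial vanishing $[\h,\mathcal{I}]=\{0\}$: since $\mathcal{I}$ is a minimal ideal and $\mathcal{I}\subset\mathcal{I}^{\perp}\subset C_{\g}(\mathcal{I})$ as already observed in the text, and since $\h\subset\mathcal{I}^{\perp}$, the centralizer property gives $[\h,\mathcal{I}]=\{0\}$. This bit is already being used in Claim \ref{h es cuadratica}, so it is safe to invoke.

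Next, for the derivation property, I would apply the Jacobi super identity to $x\in\a$ and $u,v\in\h$. Using \eqref{viernes7Junio} I expand
\begin{equation*}
[x,[u,v]]=\rho(x)([u,v]_{\h})+\tau(x)([u,v]_{\h})+\sigma(x)(\gamma(u,v)),
\end{equation*}
and, using $[\h,\mathcal{I}]=\{0\}$,
\begin{equation*}
[[x,u],v]=[\rho(x)(u),v]_{\h}+\gamma(\rho(x)(u),v),\quad [u,[x,v]]=[u,\rho(x)(v)]_{\h}+\gamma(u,\rho(x)(v)).
\end{equation*}
The $\h$-component of the resulting Jacobi identity then reads exactly
\begin{equation*}
\rho(x)([u,v]_{\h})=[\rho(x)(u),v]_{\h}+(-1)^{|x||u|}[u,\rho(x)(v)]_{\h},
\end{equation*}
so $\rho(x)\in\Der(\h,[\,\cdot\,,\,\cdot\,]_{\h})$, of degree $|x|$.

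For the $B_{\h}$-skew-symmetry, I would use $\mathcal{I}\subset\h^{\perp}$ to absorb the $\tau$-components. Since $\tau(x)(u)\in\mathcal{I}$ and $v\in\h$, \eqref{descripcion de MI} gives $B(\tau(x)(u),v)=0$, hence $B_{\h}(\rho(x)(u),v)=B([x,u],v)$; the same argument gives $B_{\h}(u,\rho(x)(v))=B(u,[x,v])$. Combining super skew-symmetry of $[\,\cdot\,,\,\cdot\,]$ and invariance of $B$,
\begin{equation*}
B([x,u],v)=-(-1)^{|x||u|}B([u,x],v)=-(-1)^{|x||u|}B(u,[x,v]),
\end{equation*}
which yields $B_{\h}(\rho(x)(u),v)=-(-1)^{|x||u|}B_{\h}(u,\rho(x)(v))$, as required. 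The main point to watch is the bookkeeping of where each term of $[x,u]$, $[u,v]$ and $[x,v]$ lands in the decomposition $\a\oplus\h\oplus\mathcal{I}$; once the projections are done correctly, both statements follow almost immediately from Jacobi and invariance, so I expect no serious obstacle beyond this care with components.
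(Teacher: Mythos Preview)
Your proposal is correct and follows essentially the same route as the paper: both arguments expand $[x,[u,v]]$, $[[x,u],v]$, $[u,[x,v]]$ via \eqref{viernes7Junio}, use $[\h,\mathcal{I}]=\{0\}$ to kill the $\tau$- and $\gamma$-contributions where needed, and read off the derivation identity from the $\h$-component of Jacobi; the skew-symmetry is obtained in both cases from invariance of $B$ together with $B(\h,\mathcal{I})=\{0\}$. The only cosmetic difference is that the paper writes the $\mathcal{I}$-components as ``$\operatorname{mod}\mathcal{I}$'' rather than spelling out $\tau(x)([u,v]_{\h})+\sigma(x)(\gamma(u,v))$ and $\gamma(\rho(x)(u),v)$ explicitly as you do.
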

\begin{proof}
Let $u,v$ be in $\h$. By \eqref{viernes7Junio} we have:
\begin{equation}\label{8}
[x,[u,v]_{\mathfrak{h}}]=\rho(x)([u,v]_{\mathfrak{h}})\,\operatorname{mod}\mathcal{I}.
\end{equation}

Since $[u,v]=[u,v]_{\h}\,\operatorname{mod}\mathcal{I}$, by \eqref{viernes7Junio} we obtain: 
\begin{equation}\label{viernes14Junio}
[x,[u,v]_{\h}]=[x,[u,v]]\,\operatorname{mod}\mathcal{I}
\end{equation}
Due to $[\mathcal{I},\mathcal{I}^{\perp}]=\{0\}$, and using the Jacobi super identity on $[x,[u,v]]$ in \eqref{viernes14Junio}, by \eqref{viernes7Junio} we get:
\begin{equation}\label{9}
[x,[u,v]_{\mathfrak{h}}]=[\rho(x)(u),v]_{\mathfrak{h}}+(-1)^{|x||u|}[u,\rho(x)(v)]_{\mathfrak{h}}\,\,\,\operatorname{mod} \mathcal{I}.
\end{equation}
Comparing the components in $\h$ of \eqref{8} and \eqref{9}, it follows:
\begin{equation}\label{10}
\rho(x)\left([u,v]_{\mathfrak{h}}\right)=[\rho(x)(u),v]_{\mathfrak{h}}+(-1)^{|x||u|}[u,\rho(x)(v)]_{\h}.
\end{equation}
Then $\rho(x)$ is a derivation of $(\h,[\,\cdot\,,\,\cdot\,]_{\h})$. Further, due to $B$ is invariant and $B(\h,\mathcal{I})=\{0\}$, by \eqref{descripcion de MI} and \eqref{viernes7Junio} we get: 
$$
B_{\mathfrak{h}}(\rho(x)(u),v)=-(-1)^{|x||u|}B_{\mathfrak{h}}(u,\rho(x)(v)),\,\text{ for all }u,v \in \h.
$$ 
Then $\rho(x)$ is a skew-symmetric map with respect to $B_{\h}$. 
\end{proof}
Let $x$ be in $\a$ and $u,v$ be in $\h$. Using \eqref{descripcion de MI} and \eqref{viernes7Junio}, we get:
$$
\aligned
& \xi_{\delta}(\gamma(u,v))(P_{\delta}(x))=B(\gamma(u,v),x)=B([u,v],x)\\
&=(-1)^{|x|(|u|+|v|)}B(x,[u,v])=(-1)^{|x|(|u|+|v|)}B_{\h}(\rho(x)(u),v)
\endaligned
$$
Let $\Phi_{\delta}=\xi_{\delta} \circ
\gamma:\mathfrak{h} \times \mathfrak{h} \rightarrow
\mathbf{P}_{\delta}(\mathfrak{a})^{\ast}$. Then,
\begin{equation}\label{Phi cociclo para teorema}
\Phi_{\delta}(u,v)(P_{\delta}(x))=(-1)^{|x|(|u|+|v|)}\,B_{\mathfrak{h}}(\rho(x)(u),v)
\end{equation}

\begin{claim}\label{NN2}{\sl
For all $x,y$ in $\a$ and $u$ in $\h$, the following holds:
\begin{equation}\label{23d}
\,[\rho(x),\rho(y)]_{\mathfrak{gl}(\h)}-\rho([x,y]_{\a})=\ad_{\h}(\lambda(x,y)).
\end{equation}
}
\end{claim}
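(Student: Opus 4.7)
The plan is to derive \eqref{23d} directly from the Jacobi super identity of $[\,\cdot\,,\,\cdot\,]$ applied to a triple $(x,y,u)$ with $x,y\in\a$ and $u\in\h$, by projecting onto the summand $\h$ in the decomposition $\g=\a\oplus\h\oplus\mathcal{I}$.

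First I would expand each of the three brackets in
\[
[x,[y,u]] - [[x,y],u] - (-1)^{|x||y|}[y,[x,u]] = 0
\]
by means of \eqref{viernes7Junio}. For the left-hand term, since $[y,u]=\rho(y)(u)+\tau(y)(u)$ with $\rho(y)(u)\in\h$ and $\tau(y)(u)\in\mathcal{I}$, one gets
\[
[x,[y,u]] = \rho(x)\bigl(\rho(y)(u)\bigr) + \tau(x)\bigl(\rho(y)(u)\bigr) + \sigma(x)\bigl(\tau(y)(u)\bigr),
\]
whose $\h$-component is $\rho(x)\rho(y)(u)$. The analogous computation for $[y,[x,u]]$ contributes $(-1)^{|x||y|}\rho(y)\rho(x)(u)$ to the $\h$-component.

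Second, for $[[x,y],u]$ I expand $[x,y]=[x,y]_{\a}+\lambda(x,y)+\mu(x,y)$ and compute bracket by bracket: $[[x,y]_{\a},u]$ contributes $\rho([x,y]_{\a})(u)$ to $\h$; the term $[\lambda(x,y),u]=[\lambda(x,y),u]_{\h}+\gamma(\lambda(x,y),u)$ contributes $\ad_{\h}(\lambda(x,y))(u)$ to $\h$; and $[\mu(x,y),u]=0$ because $\mu(x,y)\in\mathcal{I}$, $u\in\h\subset\mathcal{I}^{\perp}$, and the preamble to the section ensures $\mathcal{I}\subset\mathcal{I}^{\perp}\subset C_{\g}(\mathcal{I})$, so $[\h,\mathcal{I}]=\{0\}$. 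Hence the $\h$-component of $[[x,y],u]$ is $\rho([x,y]_{\a})(u)+\ad_{\h}(\lambda(x,y))(u)$.

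Collecting the $\h$-components of the Jacobi identity yields
\[
\rho(x)\rho(y)(u) - (-1)^{|x||y|}\rho(y)\rho(x)(u) - \rho([x,y]_{\a})(u) - \ad_{\h}(\lambda(x,y))(u) = 0,
\]
which, since $[\rho(x),\rho(y)]_{\gl(\h)}=\rho(x)\rho(y)-(-1)^{|x||y|}\rho(y)\rho(x)$, is exactly \eqref{23d}. No real obstacle is expected; the only subtlety is invoking $[\h,\mathcal{I}]=\{0\}$ to kill the $[\mu(x,y),u]$ contribution, and keeping track of which summand each piece of \eqref{viernes7Junio} lands in so that only the $\h$-component is retained.
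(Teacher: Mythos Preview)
Your proof is correct and follows essentially the same approach as the paper: apply the Jacobi super identity to the triple $(x,y,u)$, expand each nested bracket via the decomposition formulas in \eqref{viernes7Junio}, use that $[\h,\mathcal{I}]=\{0\}$ to discard the $[\mu(x,y),u]$ term, and project onto the $\h$-component. The paper merely organizes the bookkeeping differently, first computing $[[x,y]_{\a},u]$ in two ways rather than writing out the full Jacobi identity, but the content is identical.
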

\begin{proof}
Let $x,y$ be in $\a$ and $u$ be in $\h$. From \eqref{viernes7Junio}, we obtain:
\begin{equation}\label{NN1}
[[x,y]_{\a},u]=\rho([x,y]_{\a})(u)\,\operatorname{mod}\mathcal{I}.
\end{equation}
On the other hand, using \eqref{a no es subalgebra} and that $[\h,\mathcal{I}^{\perp}]=\{0\}$, we get:
\begin{equation}\label{19-A}
[[x,y]_{\a},u]=[[x,y],u]-[\lambda(x,y),u]
\end{equation}
In \eqref{19-A} above, we use the Jacobi super identity on $[[x,y],u]$ and that $[\lambda(x,y),u]=[\lambda(x,y),u]_{\h}\,\operatorname{mod}\mathcal{I}$ (see \eqref{viernes7Junio}), to obtain:
\begin{equation}\label{19-B}
[[x,y]_{\a},u]\!=\![x,[y,u]]\!-\!(-1)^{|x||y|}[y,[x,u]]\!-\![\lambda(x,y),u]_{\h}\,\operatorname{mod}\mathcal{I}
\end{equation}
In \eqref{19-B} we use that $[x,[y,u]]=\rho(x)\circ \rho(y)(u)\,\operatorname{mod}\mathcal{I}$ (see \eqref{viernes7Junio}), to get:
\begin{equation}\label{21}
\begin{split}
&\,[[x,y]_{\a},u]=\\
&\rho(x)\circ \rho(y)(u)-(-1)^{|x||y|}\rho(y)\circ \rho(x)(u)-[\lambda(x,y),u]_{\h}\,\,\operatorname{mod}\mathcal{I}
\end{split}
\end{equation}

Comparing the components in $\h$ of \eqref{NN1} and \eqref{21}, we get \eqref{23d}.
\end{proof}

In summary, by \textbf{Claim \ref{rho es derivacion}} and \textbf{Claim \ref{NN2}}, $\rho:\a \to \Der(\h)\cap \mathfrak{o}(B_{\h})$ is an even linear map satisfying:
$$
[\rho(x),\rho(y)]_{\gl(\h)}-\rho([x,y]_{\a})=\ad_{\a}(\lambda(x,y)).
$$ 
By \eqref{17}, the maps $\lambda$ and $\rho$ satisfy:
$$
\sum_{\text{cyclic}}(-1)^{|x||z|}\left(\rho(x)(\lambda(y,z))+\lambda(x,[y,z]_{\a})\right)=0.
$$
We also have an even bilinear map $\chi_{\delta}:\a \times \h \to \mathbf{P}_{\delta}(\a)^{\ast}$, satisfying: $\chi_{\delta}(x,u)(P_{\delta}(y))=(-1)^{|u||y|}B_{\h}(\lambda(x,y),u)$. On the other hand, applying $\xi_{\delta}$ to \eqref{18} and uisng that $\xi_{\delta}\circ \sigma(x)=\ad_{\delta}^{\ast}(x)\circ \xi_{\delta}$, we obtain:
$$
\sum_{\text{cyclic}}(-1)^{|x||z|}\left(\ad_{\delta}^{\ast}(x)(\omega_{\delta}(y,z))+\omega_{\delta}(x,[y,z]_{\a})+\chi_{\delta}(x,\lambda(y,z))\right)=0,
$$
where $\omega_{\delta}$ satisfies the super cyclic condition (see \textbf{Claim \ref{condicion ciclica para mu}}). By \eqref{Phi cociclo para teorema}, the bilinear map $\Phi:\h \times \h \to \mathbf{P}_{\delta}(\a)^{\ast}$, satisfies:
$$
\Phi_{\delta}(u,v)(P_{\delta}(x))=(-1)^{|x|(|u|+|v|)}B_{\h}(\rho(x)(u),v).
$$
Therefore, $(\h,[\,\cdot\,,\,\cdot\,]_{\h},B_{\h},\rho,\lambda,\omega_{\delta})$ is a context of generalized double extension of $(\h,[\,\cdot\,,\,\cdot\,]_{\h},B_{\h})$ by $(\a,[\,\cdot\,,\,\cdot\,]_{\a})$, in such a way that the homogeneous quadratic Lie super algebra in $\a \oplus \h \oplus \mathbf{P}_{\delta}(\a)^{\ast}$ of \textbf{Thm. \ref{teorema doble extension super}}, is isometric to $(\g,[\,\cdot\,,\,\cdot\,],B_{\h})$, via the map: $x+u+\alpha \mapsto x+u+\xi_{\delta}(\alpha)$.

\begin{cor}\label{corolario 1}{\sl
Let $(\mathfrak{g},[\,\cdot\,,\,\cdot\,],B)$ be an indecomposable, quadratic Lie super algebra of 
degree $\delta$, non-simple and
$\dim(\mathfrak{g})>1$. Then, $(\mathfrak{g},[\,\cdot\,,\,\cdot\,],B)$ is a generalized double 
extension of degree $\delta$, of a quadratic Lie super algebra $(\mathfrak{h},[\,\cdot\,,\,\cdot\,]_{\h},B_{\mathfrak{h}})$ 
of degree $\delta$, by a Lie super algebra $\left(\a,[\,\cdot\,,\,\cdot\,]_{\a}\right)$.}
\end{cor}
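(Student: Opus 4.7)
The plan is to assemble the statement from the structural analysis carried out in the preceding claims, treating the corollary essentially as the summary of a completed construction. First I would extract a minimal ideal $\mathcal{I}$ of $\g$; using indecomposability together with non-degeneracy and invariance of $B$, the intersection $\mathcal{I} \cap \mathcal{I}^\perp$ must be non-zero, which by minimality forces $\mathcal{I} \subset \mathcal{I}^\perp$. Hence $\mathcal{I}$ is abelian and isotropic, and one can then pick a super-subspace $\h$ with $\mathcal{I}^\perp = \h \oplus \mathcal{I}$ and (invoking the preparatory proposition cited in the text) a super-subspace $\a$, also isotropic, with $\h^\perp = \a \oplus \mathcal{I}$, giving the decomposition $\g = \a \oplus \h \oplus \mathcal{I}$.

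Next I would identify $\mathcal{I}$ with $\mathbf{P}_\delta(\a)^\ast$ via the map $\xi_\delta$ of Claim \ref{claimJunio10}, which is well-defined and bijective by a dimension count using non-degeneracy of $B$. Under this identification, the invariant metric takes the form \eqref{descripcion de MI}. I would then decompose the bracket along $\g = \a \oplus \h \oplus \mathcal{I}$ to produce the maps $[\,\cdot\,,\,\cdot\,]_\a$, $\lambda$, $\mu$, $\rho$, $\tau$, $[\,\cdot\,,\,\cdot\,]_\h$, $\gamma$, $\sigma$ appearing in \eqref{viernes7Junio}, exactly as prepared in the text.

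The heart of the proof is then the verification that these data constitute a $\delta$-context. This amounts to stringing together the claims established earlier: Claim \ref{a es superalgebra} gives that $(\a,[\,\cdot\,,\,\cdot\,]_\a)$ is a Lie super algebra and supplies \eqref{deh2}; Claim \ref{h es cuadratica} gives the quadratic Lie super algebra $(\h,[\,\cdot\,,\,\cdot\,]_\h,B_\h)$ of degree $\delta$; Claim \ref{rho es derivacion} together with Claim \ref{NN2} shows $\rho:\a \to \Der(\h) \cap \mathfrak{o}(B_\h)$ and yields \eqref{deh1}; setting $\omega_\delta := \xi_\delta \circ \mu$, Claim \ref{condicion ciclica para mu} gives the super cyclic condition \eqref{super ciclica}, and applying $\xi_\delta$ to \eqref{18} combined with Claim \ref{sigma es representacion} delivers \eqref{deh3}; finally Claim \ref{tau-lambda} identifies $\xi_\delta \circ \tau$ with $\chi_\delta$, and the computation preceding \eqref{Phi cociclo para teorema} identifies $\xi_\delta \circ \gamma$ with $\Phi_\delta$.

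Once all the pieces of the $\delta$-context are in place, I would close the argument by comparing the bracket obtained in \eqref{viernes7Junio} with the bracket \eqref{corchete en doble extension} produced by \textbf{Thm.~\ref{teorema doble extension super}}, and the metrics \eqref{descripcion de MI} with \eqref{metrica invariante en doble extension}. The map $x+u+\alpha \mapsto x+u+\xi_\delta(\alpha)$ is then manifestly an even isometric isomorphism of Lie super algebras, which is exactly the content of the corollary. The only subtle point, and the one place where an obstacle could arise, is ensuring that the coadjoint identification is carried through with the correct sign rule; this is precisely what Claim \ref{sigma es representacion} handles, so no new computation is needed beyond cross-referencing the established claims.
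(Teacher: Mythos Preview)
Your proposal is correct and follows essentially the same route as the paper: the corollary is indeed just the summary of the chain of Claims \ref{claimJunio10}--\ref{NN2}, assembled exactly as you describe, with the isometric isomorphism $x+u+\alpha \mapsto x+u+\xi_\delta(\alpha)$ onto the double extension of \textbf{Thm.~\ref{teorema doble extension super}}. The paper's paragraph immediately preceding the corollary does precisely this collation, and your identification of which claim supplies which axiom of the $\delta$-context matches it.
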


\subsection{Odd quadratic Lie super algebras}

We will use \textbf{Thm. \ref{teorema doble extension super}}, to recover the odd quadratic Lie super algebra studied in \cite{Alb}.
\smallskip

Let$\left(\mathfrak{h},[\,\cdot\,,\,\cdot\,]_{\mathfrak{h}},B_{\mathfrak{h}}\right)$ be an odd quadratic Lie super algebra and consider a context of generalized double extension $(\mathfrak{h},[\,\cdot\,,\,\cdot\,]_{\h},B_{\mathfrak{h}},\rho,\lambda,\omega_1)$, of $(\mathfrak{h},[\,\cdot\,,\,\cdot\,]_{\mathfrak{h}},B_{\mathfrak{h}})$ by $\left(\mathfrak{a},[\,\cdot\,,\,\cdot\,]_{a}\right)$, where $\dim(\mathfrak{a})=1$, $\mathfrak{a}=\F\,x$ and $|x|=1$. 
\smallskip

By \textbf{Thm. \ref{teorema doble extension super}}, there is an odd quadratic Lie super algebra on the super-space $\g=\a \oplus \h \oplus \mathbf{P}(\a)^{\ast}$. Since $\rho:\a \to \Der(\h) \cap \mathfrak{o}(B_{\h})$ is even then $D=\rho(x)$ is an odd skew-symmetric derivation of $\h$. Let $w=\lambda(x,x)$, which is an even element of $\h$.
\smallskip

From \eqref{deh1}, we obtain $2D^2=\ad_{\mathfrak{h}}(w)$, then $D^2=\frac{1}{2}\ad_{\mathfrak{h}}(w)$. From \eqref{deh2}, we have that $D(\lambda(x,x))=D(w)=0$. In addition, from \eqref{chi delta} we obtain:
$$
\chi_1(x,u)(P(x))=-(-1)^{|u|}B_{\mathfrak{h}}(u,w),\,\text{ for all }u \in \h.
$$
Then, $\chi_1(x,u)=-(-1)^{|u|}B_{\mathfrak{h}}(u,w)P(x)^{*}$, for all $u$ in $\h$. By \eqref{deh4} we get: $\Phi_1(u,v)(P(x))=(-1)^{|u|+|v|}B_{\mathfrak{h}}(D(u),v)$; hence $\Phi_1(u,v)=(-1)^{|u|+|v|}B_{\mathfrak{h}}(D(u),v)P(x)^{*}$. Let $\varphi\!:\!\mathfrak{h} \!\times \!\mathfrak{h}\!\to
\!\mathbf{P}(\mathbb{F})$ be defined by:
$$
\varphi(u,v)=B_{\mathfrak{h}}(D(u),v),\quad \text{ for all }u,v \in \h.
$$
If $|u|=|v|$, then $\Phi_1(u,v)(P(x))=\varphi(u,v)$. If $|u| \neq |v|$, then
$|D(u)|=|v|$ and, consequently: $\varphi(u,v)=\Phi_1(u,v)(P(x))=0$. Thus,
$\varphi(u,v)=\Phi_1(u,v)(P(x))$. We extend the derivation $D$ to $\h \oplus \mathbf{P}(\a)^{\ast}$, by defining $\widehat{D}:\h \oplus \mathbf{P}(\a)^{\ast} \to \h \oplus \mathbf{P}(\a)^{\ast}$, as follows:
$$
\aligned
& \widehat{D}(u)=D(u)-(-1)^{|u|}B_{\mathfrak{h}}(u,w)P(x)^{\ast},\quad \text{ for all }u \in \h,\,\text{ and }\\
& \widehat{D}(P(x)^{*})=0.
\endaligned
$$
Let $\omega_1(x,x)=\eta \, P(x)^{*}$, where $\eta$ is in $\F$. Then, by \textbf{Thm. \ref{teorema doble extension super}}, the bracket $[\,\cdot\,,\,\cdot\,]$ on $\g=\a \oplus \h \oplus \mathbf{P}(\a)^{*}$, is given by:
$$
\aligned
\,& [x,x]=w+\eta\, P(x)^{*},\\
\,& [x,u]=D(u)-(-1)^{|u|}B_{\mathfrak{h}}(u,w)P(x)^{*},\,\text{ for all } \, u \in \h,\\
\,& [u,v]=[u,v]_{\h}+B_{\mathfrak{h}}(D(u),v),\,\,\text{ for all } \, u,v \in \h,\\
\,& [x,P(x)^{*}]=0.
\endaligned
$$
The invariant metric $B:\g \times \g \to \F$ is given by:
$$
\aligned
& B(u,v)=B_{\mathfrak{h}}(u,v),\,\,\,\text{ for all } \,u,v \in \mathfrak{h},\\
& B(x,P(x)^{*})=1,\quad \text{ and }B(\mathfrak{h},x)=B(\mathfrak{h},P(x)^{*})=\{0\}.
\endaligned
$$

\subsection{Heisenberg Lie super algebra extended by a derivation}

There are homogeneous quadratic Lie super algebras which can not be recover from the results given in \cite{Alb} and \cite{Ben}. One of them is the Heisenberg Lie super algebra extended by an even derivation. We will show that with the results in \S 3, we can recover this structure. 
\smallskip

Let $\left(\mathfrak{h},[\,\cdot\,,\,\cdot\,]_{\h},B_{\mathfrak{h}}\right)$ be an odd quadratic Lie super algebra, and consider a context of generalized double extension
$(\mathfrak{h},[\,\cdot\,,\,\cdot\,]_{\h},B_{\mathfrak{h}},\rho,\lambda,\omega_1)$ of $(\mathfrak{h},[\,\cdot\,,\,\cdot\,]_{\mathfrak{h}},B_{\mathfrak{h}})$ by $(\mathfrak{a},[\,\cdot\,,\,\cdot\,]_{\a})$. We assume that 
$\dim(\mathfrak{a})=1$, and $\mathfrak{a}=\F\,x$, with
$|x|=0$. By \textbf{Thm. \ref{teorema doble extension super}}, there is an odd quadratic Lie super algebra structure on $\g=\a \oplus \h \oplus \mathbf{P}(\a)^{\ast}$, that below we describe. 
\smallskip

Let $\rho:\a \to \Der(\h) \cap \mathfrak{o}(B_{\h})$ be the even linear map of \eqref{deh1} and let $D=\rho(x)$. As $|x|=0$, $D$ is an even skew-symmetric derivation of $\h$. Observe that \eqref{deh1} and \eqref{deh2} are trivially satisfied.
In addition, $\chi_1=\omega_1=0$. From \eqref{deh4}, we have: $\Phi_1(u,v)\left(P(x)\right)=B_{\mathfrak{h}}(D(u),v)$, for all $u,v$ in $\h$. Thus, the Lie super bracket $[\,\cdot\,,\,\cdot\,]$, on $\mathfrak{g}$, is given by:
\begin{equation*}
[\eta x\!+\!u\!+\!\zeta P(x)^{*},\eta^{\prime} x\!+\!v\!+\!\zeta^{\prime}
P(x)^{*}]=\![u,v]_{\mathfrak{h}}\!+\!D(u)\!-\!D(v)\!+\!B_{\mathfrak{h}}(D(u),v)P(x)^{*}
\end{equation*}
for all $u,v$ in $\h$ and $\eta,\eta^{\prime},\zeta,\zeta^{\prime}$ in $\F$. The invariant metric $B$ on $\mathfrak{g}$, is:
$$
\aligned
& B(u,v)=B_{\mathfrak{h}}(u,v),\,\,\text{ for all } \, u,v \in \mathfrak{h},\\
& B(\h,x)=B(\h,P(x)^{*})=\{0\},\quad \text{ and }B(x,P(x)^{*})=1.
\endaligned
$$
In the case that $\h$ is Abelian and the bilinear map $\omega:\mathfrak{h} \times \mathfrak{h} \rightarrow \mathbb{F}$, $(u,v) \mapsto B_{\mathfrak{h}}(D(u),v)$ is non-degenerate, then $\g$ is isometric to the the Heisenberg Lie super algebra $\h(D)=\F D \oplus \h \oplus \F \hslash$ extended by an even derivation $D:\h \to \h$, (see \cite{Mac}), via the map:
\begin{equation*}
\begin{array}{rccl}
\Psi:& \mathfrak{g} & \longrightarrow & \mathfrak{h}(D)\\
& \eta\, x+u+\zeta\, P(x)^{*} & \mapsto & \eta\, D+u+\zeta\, \hslash,
\end{array}
\end{equation*}

\section*{Appendix}

\begin{proposicion}\label{proposicion 1s}{\sl
Let $\g$ be a $m$-dimensional super-space over an algebraically closed field $\F$ and let
$B:\g \times \g \rightarrow \mathbb{F}$ be a non-degenerate, super symmetric, homogeneous and bilinear form. Let 
$\mathcal{I}=\mathcal{I}_0 \oplus \mathcal{I}_1$ be a $r$-dimensional, isotropic super sub-space of $\g$. Then, there exists a super sub-space $\a$ of $\g$, such that:
\smallskip

\textbf{(i)} $\a$ is isotropic.
\smallskip

\textbf{(ii)} $\dim(\a)=\dim(\mathcal{I})$.
\smallskip

\textbf{(iii)} $\a \cap \mathcal{I}=\{0\}$.
\smallskip

\textbf{(iv)} $\a \oplus \mathcal{I}$ is a non-degenerate super sub-space of $\g$ where $\mathcal{I} \simeq \mathbf{P}_{\delta}(\a)^{\ast}$.
}
\end{proposicion}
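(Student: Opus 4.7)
The plan is to prove the statement by induction on $r=\dim(\mathcal{I})$, mimicking the classical construction of a hyperbolic complement to an isotropic subspace, but carried out in a way that respects both the $\Z_2$-grading and the parity $\delta$ of $B$.

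For the base case $r=0$, the choice $\a=\{0\}$ works trivially. For the inductive step, I would pick a nonzero homogeneous element $v_0\in\mathcal{I}$ of some degree $\eta$. Since $B$ is non-degenerate and homogeneous of degree $\delta$, there exists a homogeneous $y\in\g_{\eta+\delta}$ with $B(v_0,y)\neq 0$, which I may rescale to $B(v_0,y)=1$. I then need $y$ to be isotropic; when $\delta=1$ or when $|y|=1$ and $\delta=0$, super-symmetry forces $B(y,y)=0$ automatically, and in the remaining even case $|y|=0$, $\delta=0$, I replace $y$ by $y-\tfrac{1}{2}B(y,y)v_0$, which is still homogeneous, still pairs to $1$ with $v_0$, and now satisfies $B(y,y)=0$ (using characteristic zero). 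Setting $W=\Span\{v_0,y\}$, the restriction $B|_{W\times W}$ is non-degenerate, so $\g=W\oplus W^{\perp}$ with $B|_{W^{\perp}}$ non-degenerate and homogeneous of the same degree $\delta$.

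Next I would set $\mathcal{I}'=\mathcal{I}\cap W^{\perp}$. Since every element of $\mathcal{I}$ already satisfies $B(\cdot,v_0)=0$ (isotropy), the only extra condition is pairing trivially with $y$, and the functional $B(\cdot,y)$ is nonzero on $\mathcal{I}$ because $B(v_0,y)=1$; hence $\mathcal{I}'$ is a graded isotropic subspace of $W^{\perp}$ of dimension $r-1$, and $\mathcal{I}=\mathcal{I}'\oplus\F v_0$ as super-spaces. Applying the inductive hypothesis inside $(W^{\perp},B|_{W^{\perp}})$ produces an isotropic graded $\a'\subset W^{\perp}$ with $\dim\a'=r-1$, $\a'\cap\mathcal{I}'=\{0\}$, and $\a'\oplus\mathcal{I}'$ non-degenerate. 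I then set $\a=\a'\oplus\F y$, which is clearly graded of dimension $r$ and isotropic (the summands are isotropic and $B(\a',y)=0$ since $\a'\subset W^{\perp}$).

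The verifications of (i)--(iv) are then short: if $u+\alpha y\in\a\cap\mathcal{I}$, pairing with $v_0$ gives $\alpha=0$, then $u\in\a'\cap\mathcal{I}'=\{0\}$ by induction; and $\a\oplus\mathcal{I}=(\a'\oplus\mathcal{I}')\oplus W$ is the $B$-orthogonal sum of two non-degenerate pieces, hence non-degenerate. For the final identification $\mathcal{I}\simeq\mathbf{P}_{\delta}(\a)^{\ast}$, I would use the map $\alpha\mapsto B(\alpha,\cdot)|_{\a}$, which is injective because $\a\oplus\mathcal{I}$ is non-degenerate and $\a$ is isotropic, and is a bijection by the dimension count in (ii); since $B$ has degree $\delta$, this map sends $\mathcal{I}_{\eta}$ into $\a^{\ast}_{\eta+\delta}=(\mathbf{P}_{\delta}(\a)^{\ast})_{\eta}$ by \textbf{Prop.~\ref{propiedades de P}.(iv)}, so it is a degree-zero isomorphism $\mathcal{I}\simeq\mathbf{P}_{\delta}(\a)^{\ast}$.

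The only subtle point, and the step I expect to require the most care, is making sure at each inductive step that the chosen dual vector $y$ is \emph{homogeneous} (so that $\a'\oplus\F y$ remains graded) and, in the even subcase, that the correction used to kill $B(y,y)$ preserves both homogeneity and the normalization $B(v_0,y)=1$; once this parity bookkeeping is in hand, the rest of the argument is a direct super-analogue of the standard Witt-decomposition extension.
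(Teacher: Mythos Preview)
Your inductive argument is correct and is the standard Witt-decomposition extension, adapted to the graded setting; the parity bookkeeping you flag as the delicate point is indeed handled by your case split on $(\delta,|y|)$, and the rest goes through as you describe. The paper's own proof is different in two respects. First, it treats only the odd case $\delta=1$ explicitly (the even case being classical), whereas your argument covers both parities uniformly. Second, rather than peeling off one hyperbolic plane at a time, the paper fixes a homogeneous basis $\{x_1,\dots,x_s,y_1,\dots,y_t\}$ of $\mathcal{I}$, chooses dual homogeneous vectors $u_j,v_k$ in a complement $U$ all at once via non-degeneracy, and then corrects the odd $v_k$'s by subtracting suitable combinations of the $y_i$'s (see \eqref{definicion vectores impares}) so that the resulting span $\a$ is isotropic and pairs as a dual basis with $\mathcal{I}$. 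The paper's approach is more explicit and yields $\a$ in one shot with a concrete basis; your approach is cleaner conceptually, requires no choice of complement $U$, and makes the role of the hyperbolic planes transparent. Both rely on $\operatorname{char}(\F)\neq 2$ (you for the correction $y\mapsto y-\tfrac12 B(y,y)v_0$, the paper implicitly for super skew-symmetry arguments), which is guaranteed by the ambient characteristic-zero hypothesis of the paper even though the proposition as stated only says ``algebraically closed''.
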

\begin{proof}
We assume that $B$ is odd. Due to $B$ is non-degenerate, then $\g_0$ and $\g_1$ are isotropic sub-spaces of $\g$, $\g_0 \simeq \g_1^{\ast}$ and $\dim(\g)=m=2n$, where $n=\dim \g_0=\dim \g_1$. 
\smallskip

Let $\mathcal{B}_{I}\!=\!\{x_1,\ldots,x_s,y_{1},\ldots,y_{t}\}$ be a basis of $\mathcal{I}$, where $\mathcal{I}_0\!=\!\operatorname{Span}_{\F}\{x_1,\ldots x_s\}$ and $\mathcal{I}_1=\operatorname{Span}_{\F}\{y_1,\ldots y_t\}$. Let $U=U_0 \oplus U_1$ be a super sub-space such that $\g=\mathcal{I} \oplus U$. 
\smallskip

Since $B$ is non-degenerate, there are linearly independent and homogeneous elements: $f_1+u_1,\ldots,f_n+u_n,g_1+v_1,\ldots,g_n+v_n$ in $\g$, where $f_i+u_i$ belongs to $\mathcal{I}_0\oplus U_0$, and $g_i+v_i$ belongs to $\mathcal{I}_1 \oplus U_1$, satisfying: 
\begin{equation}\label{vectores duales}
\begin{split}
& B(x_i,v_j)=B(x_i,g_j+v_j)=\delta_{ij},\quad 1 \leq i \leq s,\,\,\, 1 \leq j \leq n,\,\text{ and }\\
& B(y_i,u_j)=B(y_i,f_j+u_j)=\delta_{ij}, \quad 1 \leq i \leq t,\,\,\,1 \leq j \leq n.
\end{split}
\end{equation}
We define the odd vectors $w_1^{\prime},\ldots,w_s^{\prime}$, by:
\begin{equation}\label{definicion vectores impares}
w^\prime_k=v_k-B(v_k,u_1)y_1-\ldots-B(v_k,u_t)y_t,\quad 1 \leq k \leq s.
\end{equation}
Since $B$ is odd, $B(w^{\prime}_k,w^{\prime}_{\ell})=0$, for all $1 \leq k,\ell \leq s$. On the other hand, by \eqref{vectores duales} we get:
\begin{equation}\label{J isotropico}
B(w^{\prime}_k,u_{\ell})=0,\quad \text{ for all }1 \leq k \leq s,\,\text{ and }\,1 \leq \ell \leq t.
\end{equation}
Let $w_{\ell}=u_{\ell}$ for $1 \leq \ell \leq t$ and let $\a=\operatorname{Span}_{\F}\{w_1,\ldots,w_t, w^{\prime}_1,\ldots,w^{\prime}_s\}$, where $\a_0=\operatorname{Span}_{\F}\{w_1,\ldots,w_t\}$ and $\a_1=\operatorname{Span}_{\F}\{w^{\prime}_1,\ldots,w^{\prime}_t\}$. Since $\mathcal{I}$ is isotropic, by \eqref{vectores duales} and the definition of $w^{\prime}_k$ given in \eqref{definicion vectores impares}, we have:
\begin{equation}\label{I y J identificados}
\begin{split}
B(w_i,y_j)&=B(u_i,y_j)=\delta_{ij},\quad 1 \leq i,j \leq t,\\
B(w^{\prime}_i,x_j)&=B(v_i,x_j)=\delta_{ij},\quad 1 \leq i,j,\leq s.
\end{split}
\end{equation}
Hence \eqref{J isotropico}-\eqref{I y J identificados}, $\a$ is isotropic, $\a \cap \mathcal{I}=\{0\}$ and $\mathcal{I}$ is isomorphic to $\mathbf{P}_{1}(\a)^{\ast}$ via the map $x_i \mapsto B^{\flat}(x_i)\vert_{\a}$ and $y_i \mapsto B^{\flat}(y_i)\vert_{\a}$.
\end{proof}

\section*{Acknowledgements}

These results are contained in the PhD. thesis of the author. The author thanks the support 
provided by CONAHCYT Grant 33605 and post-doctoral fellowship
CONAHCYT 769309. The author has no conflicts to disclose.

\end{document}